\documentclass{article}

\usepackage{PRIMEarxiv}

\usepackage[utf8]{inputenc}
\usepackage[T1]{fontenc}
\usepackage[hidelinks]{hyperref}
\usepackage{url}
\usepackage{booktabs}
\usepackage{array}
\usepackage{amsmath}
\usepackage{amssymb}
\usepackage{amsthm}
\usepackage{amsfonts}
\usepackage{nicefrac}
\usepackage{natbib}
\usepackage{microtype}
\usepackage{fancyhdr}
\usepackage{graphicx}
\usepackage{xcolor}
\usepackage{enumitem}
\graphicspath{{figures/}}
\newcommand{\expOneVftErr}{8.0\times 10^{-3}}

\newcommand{\expOneBaseErr}{8.3\times 10^{-3}}
\newcommand{\expOneBaseInRangeErr}{6.3\times 10^{-4}}
\newcommand{\expOneGroupVft}{2.4\times 10^{-15}}
\newcommand{\expOneGroupBase}{6.1\times 10^{-2}}

\newcommand{\expOneTimeBase}{9}
\newcommand{\expTwoLamF}{(-0.03, -0.76)}
\newcommand{\expTwoFpF}{(21.81, -0.12)}
\newcommand{\expTwoErrF}{1.6\times 10^{-1}}
\newcommand{\expTwoVfF}{7.6\times 10^{-2}}

\newcommand{\expTwoLamA}{(0.06, -0.81)}
\newcommand{\expTwoFpA}{(1.24, 0.01)}
\newcommand{\expTwoErrA}{2.9\times 10^{-1}}

\newcommand{\expTwoLossA}{2.4\times 10^{-4}}
\newcommand{\expTwoLamJ}{(0.98, -0.98)}
\newcommand{\expTwoFpJ}{(-2.0\times 10^{-4}, 5.9\times 10^{-5})}
\newcommand{\expTwoErrJ}{5.6\times 10^{-3}}
\newcommand{\expTwoVfJ}{4.0\times 10^{-3}}
\newcommand{\expTwoLossJ}{3.8\times 10^{-5}}

\newcommand{\expThreeErrL}{4.2\times 10^{-2}}
\newcommand{\expThreeErrFixed}{1.8\times 10^{0}}
\newcommand{\expThreeErrBase}{5.3\times 10^{0}}
\newcommand{\expThreeErrBaseShort}{9.3\times 10^{-2}}

\newcommand{\expThreeFreqErr}{1.4\times 10^{-1}}
\newcommand{\expThreeFreqErrLarge}{2.5\times 10^{-2}}

\newcommand{\expThreeBErr}{3.0\times 10^{-2}}

\newcommand{\expFourTrueFloquet}{-1.684}
\newcommand{\expFourTrueRate}{0.750}
\newcommand{\expFourTrueOmegaC}{0.885}
\newcommand{\expFourTrueOmegaO}{0.661}
\newcommand{\expFourTruePeriod}{7.096}

\newcommand{\expFourCErr}{2.8\times 10^{0}}
\newcommand{\expFourQErr}{4.0\times 10^{0}}

\newcommand{\expFourCLoss}{6.1\times 10^{-2}}
\newcommand{\expFourQLoss}{4.2\times 10^{-2}}

\newcommand{\expFourBCInv}{-0.159, 0.318, 0.018, 0.041}

\newcommand{\expFourBTrueInv}{0.750, -1.684, 0.661, 0.885}

\newcommand{\expFiveAlpha}{0.998, 0.499, 0.802, 0.402}
\newcommand{\expFiveAlphaErr}{2.3\times 10^{-3}}
\newcommand{\expFiveErr}{2.8\times 10^{-2}}
\newcommand{\expFiveErrShort}{1.2\times 10^{-2}}
\newcommand{\expFiveFp}{(1.997, 1.997)}
\newcommand{\expFiveNoise}{0.02}
\newcommand{\expSixErrS}{7.0\times 10^{-2}}
\newcommand{\expSixErrR}{3.0\times 10^{2}}
\newcommand{\expSixEnergyS}{5.6\times 10^{-3}}

\newcommand{\expSixSympS}{6.4\times 10^{-13}}

\newcommand{\expSixSeeds}{losses $4.9\times 10^{-5}$, $6.3\times 10^{-5}$; errors $9.0\times 10^{-2}$, $2.0\times 10^{-1}$}
\newcommand{\expSevenEigJ}{-0.56+0.84i, -0.56-0.84i, -1.05}
\newcommand{\expSevenEigErrJ}{9.5\times 10^{-1}}
\newcommand{\expSevenErrJ}{8.8\times 10^{-2}}

\newcommand{\expSevenLossJ}{5.6\times 10^{-3}}
\newcommand{\expSevenEigF}{-0.79+0.43i, -0.79-0.43i, -0.80}
\newcommand{\expSevenEigErrF}{4.8\times 10^{-1}}
\newcommand{\expSevenErrF}{1.2\times 10^{-1}}

\newcommand{\expSevenEigB}{-0.47+1.80i, -0.47-1.80i, -1.02}
\newcommand{\expSevenEigErrB}{2.0\times 10^{-1}}
\newcommand{\expSevenErrB}{2.6\times 10^{-2}}

\newcommand{\expSevenLossUniB}{1.2\times 10^{-3}}

\newcommand{\expSevenJacDevSmallB}{5.2\times 10^{-2}}

\newcommand{\expSevenRelTinyB}{5.5\times 10^{-1}}

\newcommand{\expSevenEigR}{-0.50+1.99i, -0.50-1.99i, -1.00}
\newcommand{\expSevenEigErrR}{8.3\times 10^{-3}}
\newcommand{\expSevenErrR}{3.6\times 10^{-3}}
\newcommand{\expSevenErrMaxR}{1.2\times 10^{-2}}
\newcommand{\expSevenRelLR}{1.8\times 10^{-2}}

\newcommand{\expSevenFpR}{3.5\times 10^{-7}}

\newcommand{\expSevenLossUniR}{1.2\times 10^{-3}}

\newcommand{\expSevenJacDevSmallR}{5.6\times 10^{-3}}
\newcommand{\expSevenJacDevMidR}{6.0\times 10^{-2}}
\newcommand{\expSevenJacDevTinyR}{6.6\times 10^{-4}}
\newcommand{\expSevenRelSmallR}{3.9\times 10^{-3}}
\newcommand{\expSevenRelTinyR}{3.9\times 10^{-3}}
\newcommand{\expSevenRelMidR}{6.6\times 10^{-3}}
\newcommand{\expSevenTimeR}{85}
\newcommand{\expSevenItImR}{4\,360}

\newcommand{\expFourCurPlateauA}{3.1\times 10^{-2}}

\newcommand{\expFourLatLoss}{7.4\times 10^{-1}}

\newcommand{\expFourLatLossLatent}{2.5\times 10^{-2}}

\newcommand{\expFourDLossLagHalf}{4.1\times 10^{-3}}

\newcommand{\expFourDLossLagFour}{1.1\times 10^{0}}
\newcommand{\expFourDLossOneStep}{4.3\times 10^{-4}}
\newcommand{\expFourDErr}{4.2\times 10^{0}}

\newcommand{\expFourDInv}{0.282, -0.832, -0.148, -0.178}

\newcommand{\expFourDRunMeanMin}{4.0\times 10^{-3}}

\newcommand{\expFourDRunMeanRatio}{80}

\newcommand{\expFourEEstInv}{0.727, -1.599, -0.647, -0.887}
\newcommand{\expFourEEstErrMax}{5}
\newcommand{\expFourEEstErrMin}{0.2}

\newcommand{\expFourEFInvErrRate}{13}
\newcommand{\expFourEFInvErrFloquet}{25}

\newcommand{\expFourEFInvErrWcDec}{0.2}

\newcommand{\expFourGErrInBox}{1.7\times 10^{-1}}

\newcommand{\expFourGErrInBoxFinal}{7.3\times 10^{-2}}

\newcommand{\expFourGVfInner}{7.1\times 10^{-2}}
\newcommand{\expFourGVfMedian}{3.6\times 10^{-2}}

\newcommand{\expFourStrataNear}{1.4}
\newcommand{\expFourStrataTrans}{7.0}

\newcommand{\expFourGLossLagHalf}{3.1\times 10^{-4}}
\newcommand{\expFourGLossLagOne}{3.9\times 10^{-4}}
\newcommand{\expFourGLossLagTwo}{6.1\times 10^{-4}}
\newcommand{\expFourGLossLagFour}{2.1\times 10^{-3}}
\newcommand{\expFourGLossOneStep}{2.2\times 10^{-4}}
\newcommand{\expFourGErr}{7.3\times 10^{-1}}

\newcommand{\expFourGVf}{5.8\times 10^{0}}
\newcommand{\expFourGInv}{0.738, -1.400, -0.743, -0.885}

\newcommand{\expFourGInvErrRate}{2}
\newcommand{\expFourGInvErrFloquet}{17}
\newcommand{\expFourGInvErrWo}{12}

\newcommand{\expFourGInvErrWcDec}{0.03}

\newcommand{\expFourGLInv}{1.214, -0.599, -1.339, -0.859}

\newcommand{\expFourGLInvErrRate}{62}

\newcommand{\expFourGLInvErrWo}{102}

\newcommand{\expEightEigP}{-0.45+0.71i, -0.45-0.71i, -0.35, -0.49, -0.62, -1.06}
\newcommand{\expEightEigErrP}{2.0\times 10^{0}}

\newcommand{\expEightErrP}{1.5\times 10^{-1}}

\newcommand{\expEightLossUniP}{2.4\times 10^{-2}}

\newcommand{\expEightJacDevMidP}{1.2\times 10^{0}}

\newcommand{\expEightRelTinyP}{2.6\times 10^{0}}

\newcommand{\expEightTrueEig}{-0.50+2.00i, -0.50-2.00i, -1.00+1.00i, -1.00-1.00i, -0.30, -2.00}
\newcommand{\expEightEigR}{-0.49+1.96i, -0.49-1.96i, -1.00+1.00i, -1.00-1.00i, -0.30, -1.95}
\newcommand{\expEightEigErrR}{4.6\times 10^{-2}}
\newcommand{\expEightEigErrRelR}{2.3}
\newcommand{\expEightErrR}{1.8\times 10^{-2}}
\newcommand{\expEightErrMaxR}{7.1\times 10^{-2}}
\newcommand{\expEightRelLR}{7.9\times 10^{-2}}

\newcommand{\expEightFpR}{4.9\times 10^{-6}}

\newcommand{\expEightLossUniR}{2.0\times 10^{-2}}

\newcommand{\expEightJacDevTinyR}{2.6\times 10^{-3}}
\newcommand{\expEightJacDevMidR}{1.1\times 10^{-1}}

\newcommand{\expEightRelTinyR}{1.9\times 10^{-2}}
\newcommand{\expEightRelMidR}{2.0\times 10^{-2}}

\newcommand{\expEightTimeR}{108}
\newcommand{\expEightParamsR}{56\,634}
\newcommand{\expNineErrS}{5.0\times 10^{-2}}
\newcommand{\expNineErrMaxS}{1.8\times 10^{-1}}
\newcommand{\expNineErrShortS}{2.1\times 10^{-2}}
\newcommand{\expNineEnergyS}{4.6\times 10^{-3}}

\newcommand{\expNineSympS}{3.9\times 10^{-13}}

\newcommand{\expNineVfS}{5.7\times 10^{-3}}
\newcommand{\expNineLossS}{8.5\times 10^{-5}}
\newcommand{\expNineFreqErrAS}{0.2}
\newcommand{\expNineFreqErrBS}{0.3}
\newcommand{\expNineFreqErrMaxS}{3}

\newcommand{\expNineActMedAS}{0.2}
\newcommand{\expNineActMedBS}{0.2}
\newcommand{\expNineFreqMedAS}{0.05}
\newcommand{\expNineFreqMedBS}{0.08}
\newcommand{\expNineTimeS}{56}
\newcommand{\expNineParamsS}{35\,376}

\newcommand{\expNineErrR}{9.7\times 10^{-2}}

\newcommand{\expNineEnergyR}{4.7\times 10^{-2}}

\newcommand{\expNineSympR}{4.4\times 10^{1}}

\newcommand{\expNineLossR}{5.5\times 10^{-4}}

\newcommand{\expNineActMedAR}{10.2}
\newcommand{\expNineActMedBR}{26.8}
\newcommand{\expNineFreqMedAR}{0.27}
\newcommand{\expNineFreqMedBR}{0.28}
\newcommand{\expNineTimeR}{34}
\newcommand{\expNineParamsR}{36\,548}

\newcommand{\expNineErrW}{3.5\times 10^{0}}

\newcommand{\expNineEnergyW}{1.3\times 10^{10}}

\newcommand{\expNineLossW}{1.2\times 10^{-1}}

\newcommand{\expOneRelErr}{1.1\times 10^{-2}}

\newcommand{\expOneRelParams}{77\,638}
\newcommand{\expOneDeepEqLoss}{7.6\times 10^{-5}}

\newcommand{\expOneDeepErr}{5.1\times 10^{-3}}
\newcommand{\expOneDeepMax}{1.9\times 10^{-2}}
\newcommand{\expOneDeepTime}{103}
\newcommand{\expOneDeepParams}{53\,094}

\newcommand{\expSevenSeedsEigErr}{8.3\times 10^{-3} / 8.0\times 10^{-3} / 8.0\times 10^{-3}}
\newcommand{\expSevenSeedsEigErrMax}{8.3\times 10^{-3}}
\newcommand{\expSevenSeedsErr}{3.6\times 10^{-3} / 3.6\times 10^{-3} / 4.1\times 10^{-3}}

\newcommand{\expSevenSeedsFp}{3.5\times 10^{-7} / 1.1\times 10^{-6} / 8.2\times 10^{-7}}
\newcommand{\expSevenSeedsSpectra}{$\{-0.495-1.993i, -0.495+1.993i, -1.000\}$; $\{-0.493-1.996i, -0.493+1.996i, -1.003\}$; $\{-0.493-1.997i, -0.493+1.997i, -1.002\}$}
\newcommand{\expSevenSeedsTime}{85 / 59 / 30}

\newcommand{\expEightSeedsEigErr}{4.6\times 10^{-2} / 4.5\times 10^{-2} / 2.7\times 10^{-2}}

\newcommand{\expEightSeedsEigRel}{2.3 / 2.2 / 1.3}
\newcommand{\expEightSeedsEigRelMax}{2.3}
\newcommand{\expEightSeedsErr}{1.8\times 10^{-2} / 1.5\times 10^{-2} / 1.6\times 10^{-2}}

\newcommand{\expEightSeedsFp}{4.9\times 10^{-6} / 1.5\times 10^{-6} / 3.9\times 10^{-6}}
\newcommand{\expEightSeedsTime}{108 / 81 / 81}

\newcommand{\expNineSeedsLoss}{8.5\times 10^{-5} / 1.2\times 10^{-4} / 9.1\times 10^{-5}}
\newcommand{\expNineSeedsErr}{5.0\times 10^{-2} / 5.8\times 10^{-2} / 4.2\times 10^{-2}}

\newcommand{\expNineSeedsEnergy}{4.6\times 10^{-3} / 4.9\times 10^{-3} / 5.6\times 10^{-3}}

\newcommand{\expNineSeedsSymp}{3.9\times 10^{-13} / 5.1\times 10^{-13} / 3.9\times 10^{-13}}

\newcommand{\expNineSeedsFreq}{0.05, 0.08 / 0.09, 0.09 / 0.07, 0.09}
\newcommand{\expNineSeedsFreqMax}{0.09}
\newcommand{\expNineSeedsAct}{0.15, 0.21 / 0.22, 0.16 / 0.17, 0.18}
\newcommand{\expNineSeedsActMax}{0.22}
\newcommand{\expNineSeedsTime}{56 / 45 / 44}

\newcommand{\expFourCmpOStep}{1.9\times 10^{-3}}

\newcommand{\expFourCmpOLagHalf}{9.9\times 10^{-4}}

\newcommand{\expFourCmpOLagFour}{3.9\times 10^{-3}}
\newcommand{\expFourCmpOInBox}{2.8\times 10^{-1}}

\newcommand{\expFourCmpEStep}{3.9\times 10^{-4}}

\newcommand{\expFourCmpELagHalf}{5.3\times 10^{-4}}

\newcommand{\expFourCmpELagFour}{4.9\times 10^{-3}}
\newcommand{\expFourCmpEInBox}{2.9\times 10^{-1}}

\newcommand{\expFourCmpGStep}{2.2\times 10^{-4}}

\newcommand{\expFourCmpGLagHalf}{3.0\times 10^{-4}}

\newcommand{\expFourCmpGLagFour}{2.2\times 10^{-3}}
\newcommand{\expFourCmpGInBox}{1.7\times 10^{-1}}

\newcommand{\R}{{\mathbb{R}}}
\newcommand{\dd}{\mathrm{d}}
\DeclareMathOperator{\id}{id}
\DeclareMathOperator{\Lip}{Lip}

\newtheorem{theorem}{Theorem}
\newtheorem{proposition}[theorem]{Proposition}
\newtheorem{lemma}[theorem]{Lemma}
\theoremstyle{definition}
\newtheorem{definition}[theorem]{Definition}
\theoremstyle{remark}
\newtheorem{remark}[theorem]{Remark}

\title{Neural Vector Field Transformations: Learning the Flow of Autonomous Systems by Conjugation to Explicitly Integrable Reference Flows}

\author{
  Andrew Qing He\\
  Department of Mathematics \\
  Duke University \\
  Durham, North Carolina, US\\
  \texttt{andrew.hq@duke.edu}
}

\begin{document}
\maketitle

\begin{abstract}
We represent the flow map of an autonomous ODE $\dot x=f(x)$ as a conjugate $\Phi_t=F_\theta^{-1}\circ\Psi_t\circ F_\theta$ of an explicitly integrable \emph{reference flow} $\Psi_t$ by an invertible neural network $F_\theta$, the \emph{neural vector field transformation}. The representation is an exact flow for every parameter value and inherits the phase portrait of the reference. A $C^1$ conjugacy preserves the eigenvalues of equilibria, the periods and Floquet multipliers of periodic orbits and the frequency--amplitude relation of centers, so these invariants must be learnable parameters of $\Psi$. We give parametrized reference flows with explicit integral curves, show that an action--angle reference is Hamiltonian only if its learnable frequency map is a gradient, and show that an orientation-preserving $F_\theta$ cannot reverse the rotation sense of the reference, so it must be fixed before training. The main contribution is a training recipe: a push-forward equation residual made relative to $|f|$ and sampled log-uniformly in $|x|$ around an equilibrium, with the anchors $F_\theta(0)=0$, $DF_\theta(0)=I$; a symplectic transformation with the gradient-form frequency map; and invariants initialized from the field or data. With it the full spectrum of a three- and a six-dimensional nonlinear sink is recovered (eigenvalue deviation $\expSevenSeedsEigErrMax$ and relative error $\expEightSeedsEigRelMax\%$, three seeds each), and the action--angle variables of a four-dimensional integrable Hamiltonian system with frequency maps and actions correct to a median of $\expNineSeedsFreqMax\%$ and $\expNineSeedsActMax\%$, and an exactly symplectic flow. On a dispersive system a time-input network is eight times more accurate inside its horizon, and the data-only identification of the Van der Pol oscillator yields a conjugacy on the data region with the Floquet exponent $\expFourGInvErrFloquet\%$ off; an oracle run with the true invariants frozen attributes this to the expressivity of $F_\theta$.
\end{abstract}

\keywords{Deep learning \and Dynamical systems \and Conjugacy \and Invertible neural networks \and Flow maps}
\msc{37M10 \and 68T07 \and 37C15 \and 37J99}

\section{Introduction}
Autonomous ordinary differential equations (ODEs) $\dot x=f(x)$ on an open set $U\subseteq\R^n$ are increasingly treated with learning methods, either to solve a known system without time stepping \citep{lagaris1998artificial,raissi2019pinn}, to identify an unknown system from time series \citep{qin2019data,chen2018neuralode}, or to combine partial physical knowledge with data \citep{wang2021physics}. An autonomous system carries two structures that a general non-autonomous system lacks: a fixed vector field $f$ on the phase space, and a one-parameter group of diffeomorphisms, the \emph{flow} $\{\Phi_t\}_{t\in\R}$ generated by $f$, which satisfies
\begin{equation}\label{eq:group}
\Phi_0=\id,\qquad \Phi_s\circ\Phi_t=\Phi_{s+t}\quad\text{for all } s,t\in\R .
\end{equation}
Neural representations of the flow map usually sacrifice one of two things. A network $\Phi_\theta(t,x)$ with time as an input \citep{bilos2021neuralflows,raissi2019pinn} can be queried at any $t$ but does not satisfy \eqref{eq:group}; a network $\Phi^{(h)}_\theta(x)$ approximating the time-$h$ map \citep{qin2019data,jin2020sympnets} satisfies \eqref{eq:group} on the grid $h\mathbb Z$ only and must be iterated.

This paper studies a representation that keeps both. We write the unknown flow as a conjugate of a simple flow,
\begin{equation}\label{eq:ansatz}
\Phi_t(x)=F^{-1}\big(\Psi_t(F(x))\big),
\end{equation}
where $F$ is a diffeomorphism represented by an invertible neural network $F_\theta$ and $\Psi_t$ is a \emph{reference flow} whose integral curves are available in closed form or by a scalar quadrature, so that $\Psi_t(y)$ costs one function evaluation for any real $t$. Equation \eqref{eq:ansatz} is a flow for every parameter value, so \eqref{eq:group} holds exactly and $t$ is a continuous argument. The motivation is the rectification theorem \citep{arnold1992ode,lee2012smooth,palis1982geometric}: near every regular point a vector field is conjugate to the constant field $e_1$, whose flow is a translation. We call $F$ a \emph{vector field transformation}, the model a \emph{neural vector field transformation} (neural VFT), and $\Psi$ the \emph{reference flow}. The same ansatz with an affine reference was proposed independently as neural conjugate flows by \citet{bizzi2025ncf}; Section~\ref{sec:related} positions the present work relative to that and to the Koopman and conjugate-mapping literature.

Globally, the translation flow has no equilibria, periodic orbits or recurrence, and most systems of interest have all three, so the reference must share the qualitative behavior of the target. The first message of this paper is that \emph{qualitative} is not enough: a $C^1$ conjugacy preserves the eigenvalues of every equilibrium, the period and Floquet multipliers of every periodic orbit and the frequency--amplitude relation around a center. A fixed reference fixes these numbers and no $F$ can then make \eqref{eq:ansatz} hold. The reference must carry learnable parameters that expose all of its conjugacy invariants, and a large family of such reference flows still has explicit integral curves. The second message, which occupies most of the paper, is that whether the method works is decided by whether the transformation can actually be fitted, and that this depends on details of the loss, the sampling and the initialization that the theory does not dictate. We therefore state our contributions at the level the evidence supports.

\paragraph{Contributions.}
\begin{enumerate}[nosep,leftmargin=*]
\item \emph{Theory} (Section~\ref{sec:ansatz}). The neural VFT is a flow for every parameter value and inherits the phase portrait of the reference (Proposition~\ref{prop:flow}); a global trivialization to a translation exists exactly for dispersive flows (Proposition~\ref{prop:global}); a reference must match the $C^1$-conjugacy invariants of the target (Proposition~\ref{prop:invariants}) and, since the transformations we train preserve orientation, its sense of rotation (Remark~\ref{rem:orientation}); parametrized reference flows with explicit integral curves expose these invariants, and the action--angle reference with $d\geq2$ degrees of freedom is Hamiltonian if and only if its frequency map is a gradient (Lemma~\ref{lem:gradient}); the non-uniqueness of $F_\theta$ is the centralizer of $\Psi$, and anchors remove it (Proposition~\ref{prop:centralizer}).
\item \emph{Training methodology} (Section~\ref{sec:training}). Three components were confirmed experimentally across three seeds each: the push-forward equation residual, made \emph{relative} to $|f|$ and evaluated on collocation points whose radius around the equilibrium is log-uniformly distributed, together with the point anchors $F_\theta(0)=0$ and $DF_\theta(0)=I$; a symplectic transformation with the gradient-form frequency map; and initialization of the invariants of the reference, in particular the sense of rotation, from the field or from the data. We also give the diagnostics that distinguish a conjugacy from an interpolant: the relative residual on spheres around the equilibrium should be flat across decades of $|x|$, $\|DF_\theta(x)-I\|$ should grow linearly in $|x|$, and the true trajectories mapped by $F_\theta$ should be orbits of the reference.
\item \emph{Experiments} (Section~\ref{sec:experiments}). The full spectrum of a three-dimensional and of a six-dimensional nonlinear sink is recovered, and the action--angle variables of a four-dimensional integrable Hamiltonian system are recovered with an exactly symplectic learned flow; these three results were repeated with three seeds (Table~\ref{tab:seeds}). Single-seed results include the frequency--amplitude relation of the pendulum, the identification of all four Lotka--Volterra parameters from noisy data to $\expFiveAlphaErr$ relative error, an exactly symplectic Duffing flow, and the spectrum of a planar saddle whose stability type had to be discovered during training. Two negative results are stated with the same prominence: on a dispersive planar system a time-input network is more accurate than the transformation inside its horizon, and the data-only Van der Pol identification remains open.
\item \emph{Failure modes} (Section~\ref{sec:failures}). We document four mechanisms by which training fails and what removed each: the bifurcation barrier when the stability type of the reference must change; degenerate-Jacobian solutions that hide an invariant mismatch in a neighborhood of the equilibrium too small to be sampled, which circumvent a pointwise Jacobian anchor; the orientation barrier; and the data-only limit cycle, where an oracle run locates the obstacle in the transformation.
\end{enumerate}

\section{The conjugation ansatz}\label{sec:ansatz}

\subsection{Flows, conjugacy and the rectification theorem}
Throughout, $U\subseteq\R^n$ is open and $f\colon U\to\R^n$ is $C^1$. We write $\Phi_t(x)$ for the solution of $\dot x=f(x)$, $x(0)=x$, at time $t$. Two flows $\Phi$ on $U$ and $\Psi$ on $V$ are \emph{$C^k$-conjugate} if there is a $C^k$ diffeomorphism $F\colon U\to V$ with $F\circ\Phi_t=\Psi_t\circ F$ for all $t$; in terms of the generators $f$ and $g$ this is $F_*f=g$, i.e.
\begin{equation}\label{eq:pushforward}
DF(x)\,f(x)=g(F(x))\qquad\text{for all }x\in U .
\end{equation}
\begin{theorem}[Rectification theorem; {\citealp[\S 7]{arnold1992ode}}, {\citealp[Thm.\ 9.22]{lee2012smooth}}]\label{thm:rect}
If $f(p)\neq0$ there is a neighborhood $U_p$ of $p$ and a $C^1$ diffeomorphism $F\colon U_p\to F(U_p)\subseteq\R^n$ with $F_*f=e_1$ on $U_p$, i.e.\ $\Phi_t(x)=F^{-1}(F(x)+te_1)$ whenever both sides are defined.
\end{theorem}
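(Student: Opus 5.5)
The plan is the classical flow-box construction: build the inverse map $G=F^{-1}$ explicitly from the flow and a transversal hyperplane, and then check that it straightens the field. Translate so that $p=0$. Since $f(0)\neq0$, choose vectors $v_2,\dots,v_n$ such that $f(0),v_2,\dots,v_n$ is a basis of $\R^n$, and let $H=\{p+\sum_{i\ge2}y_iv_i\}$ be the affine hyperplane through $p$ spanned by the $v_i$. By construction $H$ is transversal to $f(p)$. Define
\begin{equation*}
G(y_1,y_2,\dots,y_n)=\Phi_{y_1}\Big(p+\sum_{i=2}^n y_iv_i\Big).
\end{equation*}
This map is defined on a neighborhood of $0$, because the flow is locally defined. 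It is $C^1$, by $C^1$ dependence of solutions on initial data, which holds since $f\in C^1$.

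The key steps, in order, are as follows.
\begin{enumerate}
\item Compute $DG(0)$. The first column is $\partial_{y_1}G=f(G(y))$, which equals $f(p)$ at $y=0$. The $i$-th column for $i\ge2$ is $D\Phi_0\,v_i=v_i$. Hence $DG(0)$ is invertible by the choice of basis.
\item Apply the inverse function theorem. It gives neighborhoods $W\ni0$ and $U_p\ni p$ such that $G\colon W\to U_p$ is a $C^1$ diffeomorphism. Set $F=G^{-1}$.
\item Verify the conjugacy. By the group property \eqref{eq:group},
\begin{equation*}
G(y+te_1)=\Phi_{y_1+t}(q)=\Phi_t(\Phi_{y_1}(q))=\Phi_t(G(y)),
\end{equation*}
where $q=p+\sum_{i\ge2}y_iv_i$. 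Setting $y=F(x)$ gives $\Phi_t(x)=F^{-1}(F(x)+te_1)$ whenever both sides lie in the domains.
\item Recover \eqref{eq:pushforward}. Differentiate the identity of the previous step at $t=0$. This gives $DG(y)e_1=f(G(y))$, which is equivalent to $DF(x)f(x)=e_1$, i.e.\ $F_*f=e_1$.
\end{enumerate}

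The only delicate point is regularity. With $f$ merely $C^1$, one has to know that $(t,x)\mapsto\Phi_t(x)$ is $C^1$ jointly, with $D_x\Phi_0=I$. Only then does the inverse function theorem apply and yield a $C^1$ (rather than smoother) $F$. I would cite the standard differentiable-dependence theorem for this rather than reprove it.

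A second, minor point is domain bookkeeping. I would shrink $W$ to a product box $(-\varepsilon,\varepsilon)\times B$ with $B\subset\R^{n-1}$. On such a box the identity in step 3 holds whenever $y$ and $y+te_1$ both lie in $W$, which is exactly the meaning of ``whenever both sides are defined'' in the statement.
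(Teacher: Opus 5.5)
Your proposal is correct. It is the standard flow-box construction (flow out of a hyperplane transversal to $f(p)$, then the inverse function theorem at the origin), which is the argument of the references the paper cites; the paper gives no proof beyond those citations. One small simplification: step 4 is already contained in step 1, since $\partial_{y_1}G=f\circ G$ holds identically in $y$ and $DF(x)=DG(F(x))^{-1}$.
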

In coordinates, \eqref{eq:pushforward} with $g=e_1$ reads $\nabla F_1\cdot f=1$ and $\nabla F_i\cdot f=0$ for $i\geq2$: the last $n-1$ components of such a transformation are independent first integrals and the first component is a \emph{time function} increasing at unit rate along orbits. A region containing an equilibrium or a periodic orbit admits no continuous time function, which is the obstruction to a global translation reference.

\subsection{The neural VFT}
\begin{definition}[Reference flow]
A \emph{reference flow} on $V\subseteq\R^n$ is a $C^1$ flow $\{\Psi_t\}$ with generator $g$ for which $\Psi_t(y)$ can be evaluated for arbitrary $t\in\R$ and $y\in V$ by an explicit formula, possibly involving the inversion of a scalar monotone function. It may depend on a finite-dimensional parameter $\beta$.
\end{definition}
\begin{definition}[Neural VFT]
Given a reference flow $\Psi$ on $\R^n$ and an invertible neural network $F_\theta\colon\R^n\to\R^n$ with an exactly computable inverse, the \emph{neural vector field transformation} is the family
\begin{equation}\label{eq:vft}
\Phi^\theta_t:=F_\theta^{-1}\circ\Psi_t\circ F_\theta,\qquad t\in\R .
\end{equation}
\end{definition}
\begin{proposition}[Structure of the learned flow]\label{prop:flow}
For every $\theta$ and $\beta$, $\{\Phi^\theta_t\}$ is a $C^1$ flow on $\R^n$ with generator $f_\theta(x)=DF_\theta(x)^{-1}g(F_\theta(x))$. Moreover:
\begin{enumerate}[nosep,leftmargin=*]
\item[(i)] $S$ is invariant under $\Psi$ if and only if $F_\theta^{-1}(S)$ is invariant under $\Phi^\theta$; $x^*$ is an equilibrium of $\Phi^\theta$ iff $F_\theta(x^*)$ is one of $\Psi$, and $\gamma$ is a periodic orbit of $\Phi^\theta$ of period $T$ iff $F_\theta(\gamma)$ is one of $\Psi$ with the same period;
\item[(ii)] $Df_\theta(x^*)=DF_\theta(x^*)^{-1}Dg(F_\theta(x^*))DF_\theta(x^*)$ at an equilibrium, and the monodromy matrices of corresponding periodic orbits are similar;
\item[(iii)] if $H$ is a first integral of $\Psi$ then $H\circ F_\theta$ is a first integral of $\Phi^\theta$;
\item[(iv)] if $n=2d$, $F_\theta$ is symplectic \emph{and $\Psi$ is the Hamiltonian flow of some $H_0$}, then $\Phi^\theta$ is the Hamiltonian flow of $H_0\circ F_\theta$.
\end{enumerate}
\end{proposition}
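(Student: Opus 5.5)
The plan is to treat everything as a consequence of the conjugacy identity $F_\theta\circ\Phi^\theta_t=\Psi_t\circ F_\theta$, which holds by definition. I assume throughout that $F_\theta$ is a $C^1$ diffeomorphism of $\R^n$, i.e.\ invertible with a $C^1$ inverse, and that $\Psi$ is a global flow on $\R^n$. Then each $\Phi^\theta_t$ is a $C^1$ diffeomorphism. The group law follows by telescoping: $\Phi^\theta_0=F_\theta^{-1}F_\theta=\id$ and $\Phi^\theta_s\circ\Phi^\theta_t=F_\theta^{-1}\Psi_s F_\theta F_\theta^{-1}\Psi_t F_\theta=F_\theta^{-1}\Psi_{s+t}F_\theta$. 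Joint $C^1$ regularity in $(t,x)$ is inherited from $\Psi$ via the chain rule. For the generator I differentiate $F_\theta(\Phi^\theta_t(x))=\Psi_t(F_\theta(x))$ at $t=0$. This gives $DF_\theta(x)f_\theta(x)=g(F_\theta(x))$, which is \eqref{eq:pushforward}, and inverting $DF_\theta(x)$ yields the stated formula.

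For (i), the conjugacy gives $\Phi^\theta_t(F_\theta^{-1}(S))=F_\theta^{-1}(\Psi_t(S))$, so invariance transfers in both directions because $F_\theta$ is a bijection. Equilibria are the case of a one-point invariant set. For periodic orbits, $\Phi^\theta_T(x)=x$ holds iff $\Psi_T(F_\theta x)=F_\theta x$, so the sets of return times coincide, and with them the minimal period. The orbit of $x$ is mapped onto the orbit of $F_\theta(x)$.

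For (ii), I differentiate $f_\theta=DF_\theta^{-1}\,g\circ F_\theta$ at an equilibrium $x^*$. Since $g(F_\theta(x^*))=0$, the term involving the derivative of $DF_\theta^{-1}$ is multiplied by zero and drops out, which leaves the similarity $Df_\theta(x^*)=DF_\theta(x^*)^{-1}Dg(F_\theta(x^*))DF_\theta(x^*)$. This is the only step with a small subtlety: it needs $F_\theta$ to be $C^2$, or else one should argue instead through $D\Phi^\theta_t(x^*)=DF_\theta^{-1}\,D\Psi_t\,DF_\theta$ and differentiate in $t$, which requires only $C^1$. I will use the latter route to be safe. For periodic orbits I differentiate the conjugacy in $x$ at $t=T$, giving $D\Phi^\theta_T(x)=DF_\theta(x)^{-1}D\Psi_T(F_\theta x)DF_\theta(x)$, which is similarity of the monodromy matrices.

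For (iii), $H\circ F_\theta\circ\Phi^\theta_t=H\circ\Psi_t\circ F_\theta=H\circ F_\theta$, so $H\circ F_\theta$ is a first integral of $\Phi^\theta$.

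For (iv), write $g=J\nabla H_0$ and use symplecticity, $DF_\theta^{T}JDF_\theta=J$, equivalently $DF_\theta^{-1}J=J\,DF_\theta^{T}$. Then
\[
f_\theta=DF_\theta^{-1}J\nabla H_0(F_\theta)=J\,DF_\theta^{T}\nabla H_0(F_\theta)=J\nabla(H_0\circ F_\theta),
\]
so $f_\theta$ is the Hamiltonian vector field of $H_0\circ F_\theta$, and the flow it generates is $\Phi^\theta$.

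Nothing in this proof is a real obstacle; the only care needed is the regularity point in (ii).
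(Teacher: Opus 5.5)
Your proof is correct and follows the paper's argument step for step: the telescoping group law, differentiating the conjugacy at $t=0$, transporting invariant sets, equilibria and periods through the bijection $F_\theta$, differentiating at an equilibrium and at $t=T$, and pulling back first integrals. The only difference is in (iv), where you verify $F_\theta^*X_{H_0}=X_{H_0\circ F_\theta}$ in coordinates via $DF_\theta^{-1}J=J\,DF_\theta^{T}$, whereas the paper cites it.

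One correction to your regularity aside in (ii). The paper's direct route needs only $C^1$: write $f_\theta=Mv$ with $M(x)=DF_\theta(x)^{-1}$ continuous and $v=g\circ F_\theta$ differentiable, and use $v(x^*)=0$ to get $Df_\theta(x^*)=M(x^*)\,Dv(x^*)$. By contrast, your ``safer'' $t$-route has to interchange $\partial_t$ and $D_x$ at $(0,x^*)$, which is not automatic when $f_\theta$ is merely continuous.
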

\begin{proof}
$\Phi^\theta_0=\id$ and $\Phi^\theta_s\circ\Phi^\theta_t=F_\theta^{-1}\Psi_sF_\theta F_\theta^{-1}\Psi_tF_\theta=F_\theta^{-1}\Psi_{s+t}F_\theta$. Differentiating $F_\theta\circ\Phi^\theta_t=\Psi_t\circ F_\theta$ at $t=0$ gives $DF_\theta\,f_\theta=g\circ F_\theta$. (i) is immediate from \eqref{eq:vft}; (ii) follows by differentiating $f_\theta$ at an equilibrium and $\Phi^\theta_T$ at a point of $\gamma$; (iii) $H(F_\theta(\Phi^\theta_tx))=H(\Psi_t(F_\theta x))=H(F_\theta x)$; (iv) the pull-back of a Hamiltonian vector field by a symplectic map is the Hamiltonian vector field of the pulled-back Hamiltonian, $F_\theta^*X_{H_0}=X_{H_0\circ F_\theta}$ \citep[\S 40]{arnold1989mathematical}.
\end{proof}
Proposition~\ref{prop:flow} is the inductive bias of the method: whatever is trained, the learned model is a flow whose phase portrait is a diffeomorphic copy of that of $\Psi$. It can never acquire spurious equilibria or lose a limit cycle, and it conserves $H\circ F_\theta$ exactly for every conserved quantity $H$ of the reference. The hypothesis on $\Psi$ in (iv) is not a formality: Lemma~\ref{lem:gradient} below gives a natural reference whose flow is explicit but not Hamiltonian, and a symplectic $F_\theta$ does not repair that. The flip side of the proposition is that the model can only represent systems whose phase portrait \emph{is} a diffeomorphic copy of that of $\Psi$, which Section~\ref{sec:invariants} makes quantitative.

\subsection{When does a global trivialization exist?}
A flow on $U$ is \emph{parallelizable} if there is a closed set $\Sigma\subset U$ (a global cross-section) such that $(\sigma,t)\mapsto\Phi_t(\sigma)$ is a homeomorphism $\Sigma\times\R\to U$; it is \emph{dispersive} if any two points have neighborhoods $N_x,N_y$ and a $T>0$ with $\Phi_t(N_x)\cap N_y=\emptyset$ for $|t|>T$.
\begin{proposition}[{\citealp[Thm.\ IV.1.8, IV.2.6]{bhatia1970stability}}]\label{prop:global}
Let $U$ be invariant and the flow complete on $U$. The flow is parallelizable on $U$ if and only if it is dispersive. If it is parallelizable with a $C^1$ cross-section $\Sigma$ transversal to $f$, the map $F(x)=(\tau(x),\xi(x))$, where $\Phi_{-\tau(x)}(x)\in\Sigma$ and $\xi$ is a chart of $\Sigma$, is a $C^1$ diffeomorphism $U\to\R\times\xi(\Sigma)$ with $F_*f=e_1$.
\end{proposition}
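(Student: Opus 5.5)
The equivalence of parallelizability and dispersiveness is the cited theorem of Bhatia--Szeg\H{o}; I take it as given and prove only the second sentence, the $C^1$ trivialization. The plan is to define the hitting time $\tau$ and the foot point $\pi(x)=\Phi_{-\tau(x)}(x)\in\Sigma$, check that $\tau$ is well defined, show both are $C^1$ via the implicit function theorem, and then verify the push-forward identity \eqref{eq:pushforward} with $g=e_1$ directly.

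\emph{Well-definedness.} Parallelizability says $h\colon\Sigma\times\R\to U$, $h(\sigma,t)=\Phi_t(\sigma)$, is a homeomorphism. Hence every $x\in U$ equals $\Phi_t(\sigma)$ for a unique pair $(\sigma,t)$. Set $\tau(x)=t$ and $\pi(x)=\sigma$, so that $\Phi_{-\tau(x)}(x)=\pi(x)\in\Sigma$. The map $F=(\tau,\xi\circ\pi)$ is then $(\id\times\xi)\circ h^{-1}$ composed with the swap, so it is a homeomorphism of $U$ onto $\R\times\xi(\Sigma)$.

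\emph{Regularity (the main step).} Near a point $\sigma_0\in\Sigma$, I write $\Sigma$ locally as a level set $\{\phi=0\}$ of a $C^1$ function $\phi$ with $D\phi(\sigma_0)f(\sigma_0)\neq0$; transversality allows this. Consider the equation
\[
G(x,s):=\phi\big(\Phi_{-s}(x)\big)=0 .
\]
It holds at $(x_0,\tau(x_0))$, and
\[
\partial_sG=-D\phi\,f\neq0 .
\]
The implicit function theorem then gives a $C^1$ local solution $s(x)$.

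The obstacle is to identify this local solution with the global $\tau$. The implicit function theorem only yields a local crossing near $\tau(x_0)$, so one must rule out that $\tau$ jumps to a different crossing time. Continuity of $\tau$, which follows from $h^{-1}$ being continuous, settles this: near $x_0$ both $\tau(x)$ and $s(x)$ are close to $\tau(x_0)$, and local uniqueness in the implicit function theorem forces them to agree.

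Consequently $\tau$ is $C^1$, and so is $\pi(x)=\Phi_{-\tau(x)}(x)$, since the flow of a $C^1$ field is $C^1$ in $(t,x)$. Since $\xi$ is a $C^1$ chart of $\Sigma$, the map $F$ is $C^1$. The inverse $F^{-1}(t,u)=\Phi_t(\xi^{-1}(u))$ is manifestly $C^1$, so $F$ is a $C^1$ diffeomorphism.

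\emph{Push-forward.} Uniqueness of the decomposition gives $\tau(\Phi_t x)=\tau(x)+t$ and $\pi(\Phi_t x)=\pi(x)$. Therefore
\[
F(\Phi_t x)=F(x)+te_1 .
\]
Differentiating at $t=0$ yields $DF(x)f(x)=e_1$, that is, $F_*f=e_1$, consistent with Theorem~\ref{thm:rect} but now global.
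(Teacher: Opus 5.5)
Your argument is correct. The paper gives no proof of its own: it attributes the whole proposition to Bhatia--Szeg\H{o}, whose cited theorems are topological and cover only the equivalence of parallelizability and dispersiveness. The $C^1$ trivialization in the second sentence is therefore asserted there without argument. Like the paper, you take the equivalence from that source, and you then supply the missing smooth part:
\begin{itemize}
\item $\tau$ is the hitting time of $\Sigma$, and it is $C^1$ by the implicit function theorem applied to $\phi(\Phi_{-s}(x))=0$, with $\partial_s\neq0$ by transversality.
\item The local solution coincides with the global $\tau$ by continuity of $h^{-1}$. Continuity of $h^{-1}$ also puts $\pi(x)$ in the domain of $\phi$, so that $G(x,\tau(x))=\phi(\pi(x))=0$.
\item $F_*f=e_1$ follows from the equivariance $F\circ\Phi_t=F+te_1$.
\end{itemize}
One small precision: the local level-set description comes from $\Sigma$ being an embedded $C^1$ hypersurface; transversality only supplies $D\phi\,f\neq0$.

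A slightly shorter route avoids the local-versus-global identification altogether. Apply the inverse function theorem to $F^{-1}(t,u)=\Phi_t(\xi^{-1}(u))$. Using $f(\Phi_t\sigma)=D\Phi_t(\sigma)f(\sigma)$, its differential is $D\Phi_t(\sigma)\,[\,f(\sigma)\mid D\xi^{-1}(u)\,]$. This matrix is invertible precisely because $f(\sigma)\notin T_\sigma\Sigma$. A $C^1$ bijection between open sets with everywhere invertible differential has a $C^1$ inverse, so $F$ is $C^1$. This route uses only that every orbit meets $\Sigma$ exactly once, not the continuity of $h^{-1}$. Your version, in exchange, exhibits $\tau$ directly as a $C^1$ time function, which is the object the paper discusses after Theorem~\ref{thm:rect}.
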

The translation reference is therefore admissible exactly for dispersive dynamics; any recurrence requires a different reference. Since $F_\theta\colon\R^n\to\R^n$ is a diffeomorphism of the whole space, $\Phi^\theta$ is complete; a system with finite-time blow-up cannot be represented on all of $\R^n$ but can be on any bounded invariant sub-domain, which is all that training on a box asks for.

\subsection{Conjugacy invariants and orientation}\label{sec:invariants}
\begin{proposition}[Invariants of $C^1$ conjugacy]\label{prop:invariants}
Suppose $F\circ\Phi_t=\Psi_t\circ F$ on $U$ with $F$ a $C^1$ diffeomorphism. Then
\begin{enumerate}[nosep,leftmargin=*]
\item[(a)] $F$ maps equilibria to equilibria and the Jacobians $Df(x^*)$ and $Dg(F(x^*))$ are similar; in particular their spectra coincide;
\item[(b)] $F$ maps periodic orbits to periodic orbits of the same period with similar monodromy matrices; in particular the Floquet multipliers coincide;
\item[(c)] $F$ maps every invariant set, first integral and $\omega$-limit set to its counterpart; the dependence of the period on the orbit (e.g.\ on the energy around a center) is preserved;
\item[(d)] if $F$ is only a homeomorphism, (a) and (b) weaken to: equilibria, their stability type, periodic orbits and their periods are preserved, but spectra and multipliers need not be.
\end{enumerate}
\end{proposition}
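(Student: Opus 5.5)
The plan is to derive everything from the conjugacy identity $F\circ\Phi_t=\Psi_t\circ F$ and its infinitesimal form \eqref{eq:pushforward}, $DF(x)f(x)=g(F(x))$, which we obtain by differentiating the identity at $t=0$. This is the computation already used in the proof of Proposition~\ref{prop:flow}, now with a general $C^1$ diffeomorphism $F$ in place of $F_\theta$.

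For (a), we first use that $DF(x)$ is invertible. Then $f(x^*)=0$ holds if and only if $g(F(x^*))=0$, so equilibria correspond. Next we differentiate \eqref{eq:pushforward} at $x^*$. The term $D^2F(x^*)[\cdot,f(x^*)]$ vanishes because $f(x^*)=0$. Since $F$ is only $C^1$, we would instead differentiate the flow identity in $x$, giving $DF(\Phi_t x^*)\,D\Phi_t(x^*)=D\Psi_t(F x^*)\,DF(x^*)$. At the equilibrium this reads $D\Phi_t(x^*)=A^{-1}D\Psi_t(Fx^*)A$ with $A=DF(x^*)$. Writing $D\Phi_t(x^*)=e^{tDf(x^*)}$ and differentiating in $t$ at $0$ yields $Df(x^*)=A^{-1}Dg(F(x^*))A$. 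The spectra then coincide.

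For (b), let $\gamma$ be periodic of minimal period $T$ through $p$. Applying $F$ to $\Phi_T(p)=p$ gives $\Psi_T(F p)=Fp$. If $\Psi_s(Fp)=Fp$ held for some $0<s<T$, then applying $F^{-1}$ would give $\Phi_s(p)=p$. So the minimal periods agree and $F(\gamma)$ is the orbit of $Fp$. Differentiating the flow identity at $p$ with $t=T$ gives $D\Phi_T(p)=DF(p)^{-1}D\Psi_T(Fp)\,DF(p)$. Hence the monodromy matrices are similar, and the multipliers, including the trivial multiplier $1$ along $f$, coincide.

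For (c), the set statements follow from $F(\Phi_t(S))=\Psi_t(F(S))$. Invariance of $S$ transfers directly. For first integrals we argue as in Proposition~\ref{prop:flow}(iii): $H\circ F^{-1}$ is a first integral of $\Psi$ exactly when $H$ is one of $\Phi$. For $\omega$-limit sets we use continuity of $F$ and $F^{-1}$: $\Phi_{t_k}x\to y$ holds iff $\Psi_{t_k}Fx\to Fy$, so $F(\omega(x))=\omega(Fx)$. The period function $T(\gamma)$ on a family of periodic orbits satisfies $T_\Psi(F(\gamma))=T_\Phi(\gamma)$ by (b). In particular, parametrizing the family by a first integral $E$ and its pushforward $E\circ F^{-1}$ preserves the frequency--amplitude relation.

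For (d), the set-theoretic arguments of (a)–(c) use only bijectivity and continuity, so equilibria, periodic orbits, minimal periods and limit sets are preserved. Stability type, meaning Lyapunov or asymptotic stability, attracting or repelling, and the dimensions of the stable and unstable sets, is defined through neighborhoods and convergence, which homeomorphisms preserve. To show that spectra need not be preserved, we give the standard counterexample $\dot x=-x$ versus $\dot y=-2y$ on $\R$, conjugated by $F(x)=\operatorname{sign}(x)|x|^2$. Here $F(e^{-t}x)=e^{-2t}F(x)$, yet the eigenvalues differ. The same construction works for multipliers, using a planar cycle with different radial contraction rates.

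The main obstacle is regularity. $F$ is only $C^1$, so we must avoid second derivatives of $F$ and work with the differentiated flow identity rather than differentiating \eqref{eq:pushforward} again. We must also state (d) carefully: \emph{stability type} means topological stability and the dimensions of the stable and unstable sets in the hyperbolic case. It does not mean, for example, node versus focus, which homeomorphisms do not preserve.
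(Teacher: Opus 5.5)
Your proposal is correct. For (b), (c) and the period part of (d) it follows the paper's argument: the paper proves (a)--(c) by reading Proposition~\ref{prop:flow}(i)--(iii) in the opposite direction, transporting sets, first integrals and the relation $D\Phi_T(p)=DF(p)^{-1}D\Psi_T(F(p))\,DF(p)$ through the conjugacy, as you do.

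Two steps differ. In (a), the paper differentiates the generator relation $f=DF^{-1}\,(g\circ F)$ at $x^*$, whereas you go through the variational equation $D\Phi_t(x^*)=e^{tDf(x^*)}$ to avoid second derivatives of $F$. That is valid, but the detour is not forced. Since $g(F(x^*))=0$, you have $f(x)=DF(x)^{-1}\bigl[Dg(F(x^*))\,DF(x^*)(x-x^*)+o(|x-x^*|)\bigr]$, and continuity of $DF^{-1}$ alone yields $Df(x^*)=DF(x^*)^{-1}Dg(F(x^*))\,DF(x^*)$.

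In (d), the paper appeals to Hartman--Grobman. Together with the topological classification of hyperbolic linear flows, this says more: a $C^0$ conjugacy sees a hyperbolic spectrum only through its stable and unstable dimensions. Your example $x\mapsto\mathrm{sign}(x)|x|^2$, conjugating the flow of $\dot x=-x$ to that of $\dot y=-2y$, proves exactly the ``need not be'' claim with no machinery. It also shows what is lost: $F$ is $C^1$ but $DF(0)=0$, so $F^{-1}$ is not differentiable at $0$. Your caveat that ``stability type'' must be read topologically, not as node versus focus, is a clarification the paper leaves implicit.

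One small repair is needed. In (d) you cannot reuse the equilibrium argument of (a), because it needed $DF(x^*)$ to be invertible. Use instead that $x^*$ is an equilibrium if and only if $\Phi_t(x^*)=x^*$ for all $t$; this transfers under any bijective conjugacy.
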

\begin{proof}
(a)--(c) are Proposition~\ref{prop:flow}(i)--(iii) read in the opposite direction. (d): periods are preserved because $F\circ\Phi_T=\Psi_T\circ F$; the Hartman--Grobman theorem \citep{hartman1960lemma,palis1982geometric} shows that a $C^0$ conjugacy does not see the spectrum beyond the stable and unstable dimensions.
\end{proof}
\begin{remark}[Consequences for the choice of $\Psi$]\label{rem:consequences}
A fixed reference $e^{At}$ is admissible for a target with a hyperbolic equilibrium of spectrum $\{\lambda_i\}$ only if $\sigma(A)=\{\lambda_i\}$. A target with an attracting limit cycle of period $T$ and Floquet exponent $\mu_F<0$ around an unstable focus with eigenvalues $\lambda_0\pm i\omega_0$ fixes four independent real numbers that the reference must reproduce. The Hopf normal form $\dot r=r(\lambda_0-r^2)$, $\dot\vartheta=\omega_0$ has $\mu_F=-2\lambda_0$ and $T=2\pi/\omega_0$, so it is admissible only on a codimension-two set; the Van der Pol oscillator with $\mu=1.5$ has $\lambda_0=\expFourTrueRate$ and $\mu_F=\expFourTrueFloquet\neq-2\lambda_0$ and is not in that set. Resonances add a subtler restriction: by Poincar\'e--Dulac and Sternberg \citep{sternberg1957local,arnold1988geometrical} a smooth linearization near an equilibrium may fail when $\lambda_j=\sum_im_i\lambda_i$ with $|m|\geq2$, in which case the reference must contain the resonant monomials; a $C^1$ linearization exists under weaker non-resonance conditions, which is the regularity we need.
\end{remark}
\begin{remark}[Orientation]\label{rem:orientation}
Let $n=2$ and let $x^*$ be a focus, i.e.\ $Df(x^*)$ has non-real eigenvalues. The quadratic form $x\mapsto x\wedge Df(x^*)x$ is then definite, and its sign is the sense of rotation of the linearized flow. If $F$ conjugates $\Phi$ to $\Psi$ then $Dg(F(x^*))=P\,Df(x^*)P^{-1}$ with $P=DF(x^*)$, and $(Py)\wedge(PAy)=\det(P)\,y\wedge Ay$, so the sense of rotation is preserved when $\det DF(x^*)>0$ and reversed when $\det DF(x^*)<0$. Likewise an orientation-preserving homeomorphism of the plane preserves the sense in which a periodic orbit is traversed. In $2d$ dimensions with a symplectic $F$ the same holds for each elliptic frequency separately, since the sign of $\omega_i$ relative to the symplectic form (the Krein signature) is a symplectic invariant \citep{arnold1989mathematical}. Affine coupling networks have $\det DF_\theta=\exp(\sum_k s_k)>0$ and symplectic networks have $\det DF_\theta=1$, so every transformation we train is orientation-preserving for every $\theta$. Consequently the sense of rotation of the reference, i.e.\ the sign of its frequency parameters, is not learnable by $F_\theta$: it must be chosen to match the target, and it can be read off $Df(x^*)$ when $f$ is known or off the mean signed increment of the polar angle along the data when it is not. If it is initialized wrongly, the optimizer can only correct it by driving the frequency through zero, where the reference momentarily does not rotate at all; Section~\ref{sec:failures} shows that this \emph{orientation barrier} is as real in practice as the stability barrier of Remark~\ref{rem:consequences}.
\end{remark}

\subsection{Reference flows with learnable invariants}\label{sec:reference}
The requirement is that $\Psi_t(y)$ be explicit for every $t$ while all conjugacy invariants are free parameters. Two constructions give large families of such flows.

\paragraph{Skew products over first integrals.}
Suppose the phase space splits as $y=(\vartheta,I)$ and the reference field is triangular, $\dot I=0$, $\dot\vartheta=h(\vartheta,I)$, with the $\vartheta$-dynamics explicitly integrable for each fixed $I$. Since $I$ is constant along orbits, $\Psi_t(\vartheta,I)=(\chi^I_t(\vartheta),I)$ with $\chi^I$ the explicit flow at frozen $I$, and the dependence on $I$ is arbitrary because it is never integrated. This produces the \textbf{translation} $\Psi_t(y)=y+te_1$ ($\vartheta=y_1$, $I=y_{2:n}$), \textbf{shear flows} $\dot y_1=\lambda(y_{2:n})y_1$, and the \textbf{action--angle flow} on $\R^{2d}$: with $y=(y_{2i-1},y_{2i})_{i=1}^d$ and $I_i=\tfrac12(y_{2i-1}^2+y_{2i}^2)$, each pair is rotated by the angle $\omega_i(I)\,t$ (counter-clockwise for $\omega_i>0$), where $\omega\colon\R^d\to\R^d$ is a learnable map. For $d=1$ this is a nonlinear center with an arbitrary frequency--amplitude relation, and it is the Hamiltonian flow of $-\int\omega(I)\,\dd I$ \citep[\S 50]{arnold1989mathematical}. For $d\geq2$ the flow is closed-form for any $\omega$, but:
\begin{lemma}[Hamiltonian action--angle references]\label{lem:gradient}
The action--angle flow with frequency map $\omega\colon[0,\infty)^d\to\R^d$ is the Hamiltonian flow of some $H_0$ with respect to the standard symplectic form $\sum_i\dd y_{2i-1}\wedge\dd y_{2i}$ if and only if $\omega$ is a gradient, $\omega=\nabla_I\mathcal K$; then $H_0=-\mathcal K(I)$.
\end{lemma}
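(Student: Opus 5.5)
The plan is to compute the generator $g$ of the action--angle flow explicitly and compare it with a Hamiltonian vector field. In each pair $(y_{2i-1},y_{2i})$ the flow is a rotation by the angle $\omega_i(I)t$ with $I$ frozen, so differentiating at $t=0$ gives
\begin{equation*}
g_{2i-1}(y)=-\omega_i(I)\,y_{2i},\qquad g_{2i}(y)=\omega_i(I)\,y_{2i-1},
\end{equation*}
with $I_i=\tfrac12(y_{2i-1}^2+y_{2i}^2)$. With the sign convention for which the $d=1$ statement holds (Hamiltonian $-\int\omega\,\dd I$), the Hamiltonian field of $H$ with respect to $\sum_i\dd y_{2i-1}\wedge\dd y_{2i}$ is $X_H=(\partial_{y_{2i}}H,-\partial_{y_{2i-1}}H)_i$. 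I will fix this convention at the start and check it against the $d=1$ case.

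For the ``if'' direction, suppose $\omega=\nabla_I\mathcal K$ and set $H_0=-\mathcal K(I(y))$. The chain rule gives $\partial_{y_{2i}}H_0=-\omega_i(I)y_{2i}$ and $-\partial_{y_{2i-1}}H_0=\omega_i(I)y_{2i-1}$, so $X_{H_0}=g$. Since the flow of a $C^1$ field is unique, the action--angle flow is the Hamiltonian flow of $H_0$.

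For the ``only if'' direction, suppose $g=X_{H}$ for some $C^1$ (really $C^2$) function $H$ on $\R^{2d}$. Then $\iota_g\Omega=\pm\dd H$ is exact, hence closed. Compute
\begin{equation*}
\iota_g\Omega=\sum_i\omega_i(I)\,(\ldots)=\pm\sum_i\omega_i(I)\,\dd I_i,
\end{equation*}
since $y_{2i-1}\dd y_{2i-1}+y_{2i}\dd y_{2i}=\dd I_i$. So the 1-form $\alpha=\sum_i\omega_i(I)\,\dd I_i$, pulled back along the map $y\mapsto I(y)$, is closed on $\R^{2d}$. Equivalently $\dd H$ depends only on $I$, so $H$ is constant on the tori $\{I=\text{const}\}$. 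These tori are connected when all $I_i>0$ and lower-dimensional tori otherwise, and since $H$ is continuous, $H=-\mathcal K(I)$ for some function $\mathcal K$ on $[0,\infty)^d$. Comparing components then gives $\partial_{I_i}\mathcal K=\omega_i$ wherever $I_i>0$, and by continuity on the whole orthant. Hence $\omega=\nabla_I\mathcal K$, and $H_0=-\mathcal K$ up to an additive constant.

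The main obstacle is regularity at the boundary of the orthant, where some $I_i=0$ and $I$ is not a submersion, so $\mathcal K$ must be shown differentiable there. My first attempt: on the open orthant, closedness of $\alpha$ gives $\partial_{I_j}\omega_i=\partial_{I_i}\omega_j$ (for $C^1$ $\omega$), and the orthant is convex, hence simply connected, so Poincaré's lemma yields $\mathcal K$. If $\omega$ is merely continuous, I would instead define $\mathcal K(I)=-H(y)$ for any $y$ with $I(y)=I$ and read off $\omega_i$ as the one-sided derivative along the ray $y_{2i-1}=\sqrt{2I_i}$, $y_{2i}=0$. Continuity of $\omega$ then extends $\nabla\mathcal K=\omega$ to the boundary. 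I would also note that ``is a gradient'' is meant on $[0,\infty)^d$ with one-sided derivatives on the faces.
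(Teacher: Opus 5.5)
Your proof is correct. The ``if'' direction is the paper's argument (the chain rule applied to $-\mathcal K\circ I$), but the ``only if'' direction takes a different route.

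The paper runs a single chain of equivalences. Hamilton's equations hold iff $\dd H_0=-\sum_i\omega_i(I)\,\dd I_i$; this form is exact iff it is closed; closedness means $\partial\omega_i/\partial I_j=\partial\omega_j/\partial I_i$; and on the convex orthant that symmetry yields a potential by the Poincar\'e lemma. This is essentially the ``first attempt'' in your last paragraph.

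You instead use the primitive $H$ you are given. Since $\dd H$ is a combination of the $\dd I_i$, it annihilates the generators of the torus action, so $H$ is constant on the connected level sets of $I$ and factors as $-\mathcal K\circ I$. Differentiating $\mathcal K=-H\circ s$ along the section $s(I)=(\sqrt{2I_1},0,\dots,\sqrt{2I_d},0)$ gives $\partial_{I_i}\mathcal K=\omega_i$ wherever $I_i>0$. Continuity of $\omega$ then supplies the one-sided derivatives on the faces.

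What each route buys:
\begin{itemize}
\item The paper's argument is shorter. It also produces the symmetric-Jacobian condition, which is the checkable criterion behind the parametrization $\omega=\omega_0+\nabla K$.
\item It needs $\omega\in C^1$, however. It also passes implicitly from closedness on $\R^{2d}$, which only forces $\partial_j\omega_i=\partial_i\omega_j$ where $I_i,I_j>0$, to symmetry on the whole closed orthant.
\item Your argument uses no Poincar\'e lemma and needs only $H\in C^1$ and $\omega$ continuous. It also treats explicitly the faces where $I$ fails to be a submersion.
\end{itemize}

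One wording point: ``equivalently'' before the torus-invariance step is loose. What you actually use there is the identity $\dd H=-\sum_i\omega_i\,\dd I_i$ itself, not the closedness of its right-hand side.
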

\begin{proof}
The generator is $X=\sum_i\omega_i(I)\,(-y_{2i}\,\partial_{y_{2i-1}}+y_{2i-1}\,\partial_{y_{2i}})$. With $q_i=y_{2i-1}$, $p_i=y_{2i}$, Hamilton's equations $\dot q_i=\partial H_0/\partial p_i$, $\dot p_i=-\partial H_0/\partial q_i$ hold iff $\dd H_0=-\sum_i\omega_i(I)\,(q_i\dd q_i+p_i\dd p_i)=-\sum_i\omega_i(I)\,\dd I_i$. This one-form is exact iff it is closed, i.e.\ iff $\partial\omega_i/\partial I_j=\partial\omega_j/\partial I_i$ on the convex set $[0,\infty)^d$, i.e.\ iff $\omega$ is a gradient; then $H_0=-\mathcal K$.
\end{proof}
We therefore parametrize $\omega(I)=\omega_0+\nabla_I K(I)$ with a constant vector $\omega_0$ and a scalar network $K$, i.e.\ $\mathcal K=\omega_0\cdot I+K(I)$. Before this was done, a vector-valued network $\omega$ gave a reference that was explicit but not Hamiltonian, and the learned flow failed the symplecticity check even with a symplectic $F_\theta$, exactly as Proposition~\ref{prop:flow}(iv) predicts. The sign of $\omega_0$ is the orientation choice of Remark~\ref{rem:orientation}.

\paragraph{Linear flows.}
$\Psi_t(y)=e^{At}y$ with a full learnable $A\in\R^{n\times n}$, evaluated by a differentiable matrix exponential. The invariants are the spectrum of $A$; the remaining $n^2-n$ parameters are centralizer freedom (Section~\ref{sec:anchor}). The affine flows of \citet{bizzi2025ncf} belong to this class.

\paragraph{Radial quadrature: planar flows with an equilibrium and a limit cycle.}
Let $s=|y|^2$ and prescribe $\dot s=h(s)$, $\dot\vartheta=w(s)$ with $h(0)=h(s^*)=0$ and $h\neq0$ elsewhere on $[0,\infty)$. The radial equation is one-dimensional, hence integrable by quadrature: with $\mathcal T(s)=\int\dd s/h(s)$, $s(t)=\mathcal T^{-1}(\mathcal T(s_0)+t)$ and $\vartheta(t)=\vartheta_0+\int_{s_0}^{s(t)}w/h\,\dd s$. The equilibrium has eigenvalues $\tfrac12h'(0)\pm i\,w(0)$, the cycle has period $2\pi/w(s^*)$ and Floquet exponent $h'(s^*)$. We use the \emph{decoupled} reference $h(s)=s(s^*-s)R(s)$ with $R(s)=R_0+(R_1-R_0)\,4u/(1+u)^2$, $u=s/s^*$, so that $R$ stays between $R_0$ and $R_1$ and $h$ has no further zeros; the origin rate $\lambda_0=s^*R_0/2$, the Floquet exponent $\mu_F=-s^*R_1$, the radius and the frequencies $w(0)$, $w(s^*)$ are independent, as Remark~\ref{rem:consequences} requires. The two logarithmic singularities of $1/h$ are subtracted analytically, the smooth remainder is integrated by a fixed Gauss--Legendre rule, and $\mathcal T$ is inverted on each invariant side of the cycle by bisection followed by Newton steps through which gradients are propagated; the resulting $\Psi_t$ agrees with a high-order ODE solve to $10^{-12}$ and satisfies the group law to rounding. The \emph{coupled} logistic reference $h(s)=s(a-bs)$, $w=w_0+w_1s$, whose Floquet exponent $-a$ is locked to the origin rate $a/2$, is the one Remark~\ref{rem:consequences} rules out for Van der Pol; Section~\ref{sec:failures} records that it indeed gives up its cycle under training.

\subsection{Non-uniqueness and anchors}\label{sec:anchor}
\begin{proposition}[Centralizer]\label{prop:centralizer}
Let $F$ conjugate $\Phi$ to $\Psi$. A diffeomorphism $\tilde F$ also conjugates $\Phi$ to $\Psi$ if and only if $S=\tilde F\circ F^{-1}$ belongs to the centralizer $\mathcal C(\Psi)=\{S: S\circ\Psi_t=\Psi_t\circ S\ \forall t\}$. For the translation, $\mathcal C$ consists of the maps $S(y_1,y')=(y_1+\alpha(y'),\beta(y'))$ with $\beta$ a diffeomorphism of $\R^{n-1}$; for the action--angle flow with $d=1$ and non-constant $\omega$, of the maps $(\vartheta,I)\mapsto(\vartheta+\alpha(I),\beta(I))$; for a linear flow with simple non-resonant spectrum, of the linear maps commuting with $A$.
\end{proposition}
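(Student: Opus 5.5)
The plan is to prove the general equivalence first and then compute the centralizer for each of the three reference flows separately.

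\emph{General equivalence.} If $\tilde F$ conjugates $\Phi$ to $\Psi$, set $S=\tilde F\circ F^{-1}$. Then
\[
S\Psi_t=\tilde F F^{-1}\Psi_t=\tilde F\Phi_tF^{-1}=\Psi_t\tilde F F^{-1}=\Psi_tS .
\]
Conversely, if $S\in\mathcal C(\Psi)$, then $\tilde F=S\circ F$ satisfies $\tilde F\Phi_t=S\Psi_tF=\Psi_tS F=\Psi_t\tilde F$. Both directions are one-line manipulations of $F\circ\Phi_t=\Psi_t\circ F$.

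\emph{Reduction to a vector-field condition.} For the three cases I differentiate $S\circ\Psi_t=\Psi_t\circ S$ at $t=0$. This turns the commutation into
\[
DS(y)\,g(y)=g(S(y)),
\]
that is, $S$ is a self-conjugacy of $g$. Conversely, this infinitesimal identity integrates back to commutation for all $t$, because both sides of $S\circ\Psi_t=\Psi_t\circ S$ solve the same ODE in $t$ with the same initial value $S$.

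\emph{Translation.} Here $g=e_1$, and the condition reads $\partial_{y_1}S=e_1$. Integrating in $y_1$ gives $S_1=y_1+\alpha(y')$ and $S'=\beta(y')$. Since $S$ is a diffeomorphism, $\beta$ must be a diffeomorphism of $\R^{n-1}$. For the converse, any such $\beta$ paired with any $C^1$ function $\alpha$ gives a map with inverse $(z_1,z')\mapsto(z_1-\alpha(\beta^{-1}z'),\beta^{-1}z')$.

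\emph{Action--angle flow, $d=1$.} I work in polar coordinates $(\vartheta,I)$ away from the origin, where $\Psi_t(\vartheta,I)=(\vartheta+\omega(I)t,I)$. Writing $S=(\Theta,J)$, the commutation gives two equations.
\begin{itemize}[nosep,leftmargin=*]
\item $J(\vartheta+\omega(I)t,I)=J(\vartheta,I)$ for all $t$. Wherever $\omega(I)\neq0$ this makes $J$ independent of $\vartheta$, so $J=\beta(I)$.
\item $\Theta(\vartheta+\omega(I)t,I)=\Theta(\vartheta,I)+\omega(\beta(I))t$. At $t=2\pi/\omega(I)$ the left side is $\Theta(\vartheta,I)+2\pi k(I)$ with $k(I)$ a continuous integer, hence constant. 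This forces $\omega(\beta(I))=k\,\omega(I)$ and makes $\Theta-k\vartheta$ independent of $\vartheta$.
\end{itemize}
The degree $k$ of the circle map $\vartheta\mapsto\Theta(\vartheta,I)$ must be $\pm1$ because $S$ is a diffeomorphism. The main obstacle is excluding $k=-1$, which is an orientation-reversing reflection that still preserves orbits. I would invoke that it requires $\omega(\beta(I))=-\omega(I)$, and rule this out when $\omega$ does not change sign (or restrict to orientation-preserving $S$, as in Remark~\ref{rem:orientation}). Then $\Theta=\vartheta+\alpha(I)$, and the remaining condition is $\omega\circ\beta=\omega$. Non-constancy of $\omega$ enters here. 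Taken literally it only forces $\beta$ to preserve the level sets of $\omega$; I would add the remark that for strictly monotone $\omega$ this gives $\beta=\id$, and otherwise the statement is read with that constraint on $\beta$. Finally, continuity at $I=0$ and at isolated zeros of $\omega$ follows from the closure of the set where $\omega\neq0$.

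\emph{Linear flow with simple non-resonant spectrum.} Writing $S(y)=Ly+N(y)$ near the origin, the condition becomes $DS(y)Ay=AS(y)$. First, the fixed point is preserved: since $A$ is hyperbolic-type, $0$ is the only equilibrium, so $S(0)=0$. Next, expand $S$ in the eigenbasis of $A$. A monomial $y^m$ in component $j$ survives only if $\sum_i m_i\lambda_i=\lambda_j$, and non-resonance excludes this for $|m|\geq2$. So $S$ is linear with $LA=AL$. For merely $C^1$ $S$, I would avoid power series and argue instead via $S(e^{At}y)=e^{At}S(y)$, sending $t\to\pm\infty$ along the eigendirections to show that $S$ equals its derivative at $0$. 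This step is the delicate one, and it likely needs hyperbolicity in addition to non-resonance.
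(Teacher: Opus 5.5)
Your approach is the paper's: the one-line manipulation for the equivalence, then $S\circ\Psi_t=\Psi_t\circ S$ written out in each set of coordinates, with non-resonance (the paper cites Sternberg) settling the linear case. The equivalence and the translation case are fine. Your action--angle computation is more careful than the statement itself. Commutation forces $\omega\circ\beta=\omega$ (or $\omega\circ\beta=-\omega$ for a reflection), so the stated family with arbitrary $\beta$ is too large; for strictly monotone $\omega$ it collapses to $\beta=\id$. The paper's ``in the respective coordinates'' does not address this, and you are right to add the constraint.

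The genuine gap is the third clause at $C^1$ regularity. The $t\to\pm\infty$ argument you sketch cannot close it, because the claim is false there.
\begin{itemize}
\item \emph{Counterexample.} Take $A=\mathrm{diag}(-1,-\sqrt{2})$, whose spectrum is simple, non-resonant and hyperbolic. Then $S(y)=(y_1,\,y_2+|y_1|^{\sqrt{2}})$ is a $C^1$ diffeomorphism with $C^1$ inverse $(z_1,\,z_2-|z_1|^{\sqrt{2}})$. It satisfies $S\circ e^{At}=e^{At}\circ S$ and is not linear.
\item \emph{Where your limit fails.} Write $DS(0)^{-1}S=\id+R$, so that $R(y)=e^{-At}R(e^{At}y)$. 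Let $\mu_{\min}\le\mu_{\max}$ be the extreme values of $|\mathrm{Re}\,\lambda_i|$. A mere $R(z)=o(|z|)$ loses against the growth $\|e^{-At}\|\,|e^{At}y|\sim e^{(\mu_{\max}-\mu_{\min})t}$.
\item \emph{Hyperbolicity is not the missing hypothesis.} Non-resonance already implies it: a zero eigenvalue $\lambda_k$ gives the resonance $\lambda_j=\lambda_j+\lambda_k$, and a pair $\pm i\omega$ gives $\lambda_j=\lambda_j+i\omega-i\omega$.
\item \emph{What does work: extra smoothness plus a sink or source.} Suppose $S\in C^k$ with $k\ge\mu_{\max}/\mu_{\min}$. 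Your monomial argument, applied to the degree-$k$ Taylor polynomial (which commutes with $e^{At}$ exactly), kills the terms of degrees $2,\dots,k$. Then $|R(z)|\le\eta(|z|)\,|z|^k$ with $\eta\to0$, and
\[
|R(y)|\le C\,e^{(\mu_{\max}-k\mu_{\min})t}\,\eta(|e^{At}y|)\,|y|^k\to0 \quad (t\to\infty).
\]
\item \emph{Saddles.} Here even $C^\infty$ is not enough. For $A=\mathrm{diag}(1,-\sqrt{2})$ the function $u=|y_1|^{\sqrt{2}}y_2$ is invariant under the flow. With $\phi(s)=e^{-1/s^2}$ and $\phi(0)=0$, the map $(y_1(1+\phi(u)),\,y_2)$ is a nonlinear $C^\infty$ diffeomorphism in the centralizer. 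In this case only analyticity gives linearity, and there your power-series argument plus the identity theorem is complete.
\end{itemize}
The paper's proof invokes Sternberg for exactly this false $C^1$ assertion. So the defect lies in the statement, not in an idea you overlooked. Still, your proof of the third clause must replace ``likely needs hyperbolicity'' by one of two fixes: the smoothness-plus-sink/source hypothesis above, or analyticity.
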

\begin{proof}
$\tilde F\Phi_t\tilde F^{-1}=\Psi_t$ iff $S\Psi_tS^{-1}=\Psi_t$. The descriptions follow by writing $S\circ\Psi_t=\Psi_t\circ S$ in the respective coordinates; in the linear case a $C^1$ map commuting with a hyperbolic non-resonant linear flow is linear \citep{sternberg1957local}.
\end{proof}
The learned flow is the same for every element of the coset, so non-uniqueness is harmless for prediction but harmful for optimization: the centralizer contains unbounded directions (the scalings $\beta$) along which $F_\theta$ can drift or collapse, and $F_\theta$ is not identifiable when it is itself of interest. An \emph{anchor} fixes a representative. For the translation we require $F$ to be the identity on a transversal hyperplane, $F(c,\xi)=(0,\xi)$. For a reference with an equilibrium at $0$ we require $F_\theta(x^*)=0$ and, decisively, $DF_\theta(x^*)=I$ when $x^*$ is known: by Proposition~\ref{prop:flow}(ii) the Jacobian anchor forces the spectrum of the reference to equal that of $Df(x^*)$, which turns an invariant into a locally observable quantity (Section~\ref{sec:training}). For the full linear reference it also removes the $n^2-n$ centralizer parameters of $A$. A symplectic $F_\theta$ cannot scale, so for the action--angle reference the architecture itself removes the scaling direction of the centralizer.

\section{Training methodology}\label{sec:training}
The recipe that the experiments of Section~\ref{sec:experiments} support consists of three components, each of which was decisive in at least one experiment and was confirmed with three seeds:
\begin{enumerate}[nosep,leftmargin=*]
\item[(R1)] the push-forward equation residual, divided by $|f(x)|^2+\varepsilon$, evaluated on collocation points half of which have a log-uniformly distributed radius around the equilibrium, together with the point anchors $F_\theta(0)=0$ and $DF_\theta(0)=I$;
\item[(R2)] for Hamiltonian targets, a symplectic transformation with the action--angle reference in gradient form, $\omega=\omega_0+\nabla K$;
\item[(R3)] initialization of the invariants of the reference, and in particular of the sense of rotation, from the vector field or from the data, never at a default value.
\end{enumerate}
The rest of this section defines the ingredients, explains why each component matters, and gives the diagnostics.

\subsection{Invertible architectures}
Any network with an exactly computable inverse can serve as $F_\theta$: coupling flows \citep{dinh2015nice,dinh2017realnvp}, i-RevNets \citep{jacobsen2018irevnet}, autoregressive flows \citep{kingma2016improving} and, for $n=2d$ and Hamiltonian targets, symplectic networks \citep{jin2020sympnets}. We use affine coupling blocks $y_B=x_B\odot e^{s(x_A)}+t(x_A)$, $y_A=x_A$, with alternating partitions and invertible linear layers initialized at the identity, the scale $s$ bounded by a $\tanh$ of amplitude $s_{\max}$ (the \emph{scale bound}); coupling flows are universal approximators of diffeomorphisms \citep{teshima2020coupling}. In the symplectic case we use the gradient modules $(q,p)\mapsto(q,p+\nabla V(q))$ and $(q,p)\mapsto(q+\nabla T(p),p)$ of SympNets. The scale bound matters: with $s_{\max}=3$ a single block may stretch by $e^3$ and the equation residual, which multiplies the Jacobians of all blocks, is correspondingly stiff; lowering $s_{\max}$ to $1$ and adding blocks broke an optimization plateau in Experiment~1 with fewer parameters.

\subsection{Losses}\label{sec:losses}
Three information regimes occur: (1) $f$ is known; (2) only sequential observations $(t^{(k)},x^{(k)})$ of trajectories are available; (3) $f(\cdot;\alpha)$ is known up to parameters $\alpha$ and observations are available.

\paragraph{Equation residual and its push-forward form.}
Equation \eqref{eq:pushforward} can be imposed as a pull-back residual $|DF_\theta(x)f(x)-g(F_\theta(x))|^2$ or as the push-forward residual
\begin{equation}\label{eq:lpush}
\ell^{\mathrm{eq}}(x)=\big|DF_\theta^{-1}(F_\theta(x))\,g(F_\theta(x))-f(x)\big|^2=|f_\theta(x)-f(x)|^2 ,
\end{equation}
each costing one Jacobian--vector product by forward-mode automatic differentiation \citep{chen2024autodiff}. They are not equivalent for optimization. If the reference is invariant under a scaling $y\mapsto Dy$ in some directions (the transverse directions of the translation, all directions of a linear flow, the action of an action--angle flow with $\omega$ re-parametrized), then replacing $F_\theta$ by $DF_\theta$ leaves \eqref{eq:lpush} unchanged but multiplies the corresponding components of the pull-back residual by $D$: the pull-back form rewards shrinking the latent coordinates and makes $F_\theta^{-1}$ ill-conditioned. We use \eqref{eq:lpush}, which measures the learned vector field directly against $f$. In regime (3), $f$ is replaced by $f(\cdot;\alpha)$ and $\alpha$ is trained jointly.

\paragraph{Relative residual and scale-free sampling (R1).}
Near an equilibrium $|f(x)|\to0$, so the squared residual \eqref{eq:lpush} on uniformly sampled points carries almost no weight exactly where the invariants of the equilibrium act, and a mismatch of the spectrum can be hidden in a small ball at negligible cost (Section~\ref{sec:failures}). Two changes remove the problem together. The residual is made relative, $|f_\theta(x)-f(x)|^2/(|f(x)|^2+\varepsilon)$ with $\varepsilon=10^{-6}$, so that a given relative error costs the same at every scale; and half of each collocation batch is drawn with a log-uniform radius $|x|\in[10^{-3},1]$ and a uniform direction, so that every decade of $|x|$ around the equilibrium carries the same number of points. Together with the anchors $F_\theta(0)=0$, $DF_\theta(0)=I$, this makes the spectrum of $A$ observable from the residual at every scale: near $0$ the residual is $|(A-Df(0))x|^2/|Df(0)x|^2$, a direct scale-free gradient signal on $A$. Neither change alone sufficed in Experiment~7; both together recovered the spectrum in three and in six dimensions. On a field without an equilibrium in the domain ($|f|$ bounded away from zero) the relative residual is a mild reweighting and did not help (Experiment~1).

\paragraph{Data loss.}
If $F$ conjugates $\Phi$ to $\Psi$ then $F(x^{(k)})=\Psi_{t^{(k)}-t^{(j)}}(F(x^{(j)}))$ for any two observations on one trajectory. The latent residual $|F_\theta(x^{(k)})-\Psi_{\Delta t}F_\theta(x^{(j)})|^2$ has the scaling degeneracy of the pull-back form (it was tried and collapsed the interior of the limit cycle, Section~\ref{sec:failures}), so we measure the residual in the original space,
\begin{equation}\label{eq:ldat}
\ell^{\mathrm{dat}}(x^{(j)},x^{(k)},\Delta t)=|x^{(k)}-\Phi^\theta_{\Delta t}(x^{(j)})|^2 ,
\end{equation}
on pairs $(j,k)$ drawn at random with lags up to a maximal $\Delta t$. Pairs far apart in time cost the same as consecutive ones. In practice the maximal lag is started small and doubled whenever the running mean of the loss falls below a threshold tied to the noise floor; lengthening it on a schedule destroyed fits that were already poor. Where the data are unevenly distributed over the phase portrait, pairs are drawn stratified (equal shares from the neighborhood of the equilibrium, from the transient and from the attractor) so that every invariant is seen by some pairs.

\paragraph{Anchors.}
The anchors of Section~\ref{sec:anchor} are imposed as penalties: $|F_\theta(c,\xi)-(0,\xi)|^2$ on sampled $\xi$ for the translation, $|F_\theta(x^*)|^2+\|DF_\theta(x^*)-I\|^2$ for a known equilibrium. A point anchor $F_\theta(x^*)=0$ alone pins the equilibrium, which suffices to cross a stability barrier, but it does not pin the spectrum (Section~\ref{sec:failures}).

\paragraph{Symplectic transformation with gradient-form frequency map (R2).}
For a Hamiltonian target with $d$ degrees of freedom the action--angle reference with $\omega=\omega_0+\nabla K$ (Lemma~\ref{lem:gradient}) and a symplectic $F_\theta$ give a learned flow that is Hamiltonian for every parameter value (Proposition~\ref{prop:flow}(iv)) and whose latent actions $\tfrac12(Q_i^2+P_i^2)$ are, when the conjugacy is exact, the true action integrals of the invariant tori, since a symplectic conjugacy preserves the action integrals $\oint p\,\dd q$. A non-symplectic $F_\theta$ with the same reference conserves $|F_\theta|^2$ but may distort the actions while reproducing the frequencies, which are conjugacy invariants (Experiment~9).

\paragraph{Initialization of the invariants (R3).}
The parameters $\beta$ of the reference are invariants of the target and can usually be estimated before training. When $f$ is known and $x^*$ is a known equilibrium, $A$ or $\omega_0$ is initialized from $Df(x^*)$, as \citet{bizzi2025ncf} also do; at a minimum the sense of rotation is read off $Df(x^*)$. When only data are available, the sense and rate of rotation are the mean signed increment of the unwrapped polar angle along the trajectories, the Floquet exponent of an attracting cycle is the mean slope of the logarithm of the distance to the cycle during the approach, and the eigenvalues of the focus are those of a linear field fitted by least squares to finite differences of the data points nearest the equilibrium. In the Van der Pol experiment these estimates were within $\expFourEEstErrMin$--$\expFourEEstErrMax\%$ of the true invariants, and the oracle run of Section~\ref{sec:failures} suggests that such estimates should be kept fixed rather than released, since training moved them away from the truth.

\paragraph{Training.}
The total loss is a weighted sum of the active terms, each a Monte-Carlo average over fresh samples per step. We use Adam with cosine decay followed by L-BFGS on a fixed batch, in double precision. After training, $\Phi^\theta_t(x)$ for any $t$ costs one evaluation of $F_\theta$, $\Psi_t$ and $F_\theta^{-1}$.

\subsection{Diagnostics}\label{sec:diagnostics}
The training loss is not a reliable measure of success: in Experiments~7 and~8 models whose trajectory errors differ by an order of magnitude have plain equation losses on uniform points that agree to within $20\%$. Three diagnostics separate a conjugacy from an interpolant.
\begin{enumerate}[nosep,leftmargin=*]
\item \emph{Relative residual on spheres.} $|f_\theta-f|/|f|$ averaged over spheres $|x|=r$ for $r$ from $10^{-3}$ to the size of the domain must be flat in $r$. A residual that grows as $r\to0$ means the invariants of the equilibrium are wrong and the mismatch is hidden where $|f|$ is small.
\item \emph{Jacobian deviation.} With $DF_\theta(0)=I$ anchored, $\|DF_\theta(x)-I\|$ averaged over the sphere of radius $r$ must grow linearly in $r$, as for any smooth map with that Jacobian. A deviation that jumps to order one within $r<10^{-2}$ means the anchor is satisfied at the point and circumvented next to it.
\item \emph{Latent view.} The \emph{true} trajectories mapped by $F_\theta$ must be orbits of the reference (straight lines, circles, spirals of $e^{At}$, hyperbolae). A model whose latent trajectories are not reference orbits is a local interpolant that the exact group law extrapolates to nonsense beyond the lags it was trained on, whatever its loss, and must not be accepted.
\end{enumerate}
The first two are cheap and were the instruments that diagnosed the failures of Section~\ref{sec:failures}; the third is the acceptance test we apply to every model.

\subsection{Error propagation}
\begin{proposition}[Time-uniform error bound]\label{prop:error}
Let $F$ be an exact transformation on an invariant set $K$, $\Phi_t=F^{-1}\Psi_tF$, and let $F_\theta$ satisfy $\sup_K|F_\theta-F|\le\varepsilon$ and $\sup_{\Psi_{[0,T]}(F(K))}|F_\theta^{-1}-F^{-1}|\le\varepsilon'$. Then for $x\in K$ and $t\in[0,T]$,
\begin{equation}
|\Phi^\theta_t(x)-\Phi_t(x)|\le\varepsilon'+\Lip(F^{-1})\,\Lip(\Psi_t|_{F(K)})\,\varepsilon .
\end{equation}
In particular, for the translation reference, for a non-expanding reference on a compact set, and for the action--angle reference with bounded $\omega'$ up to a factor linear in $t$, the error is bounded uniformly in $t$.
\end{proposition}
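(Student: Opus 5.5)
The plan is a triangle inequality through the intermediate point $F^{-1}(\Psi_t(F_\theta(x)))$. Fix $x\in K$ and $t\in[0,T]$, and write $y=F(x)$ and $y_\theta=F_\theta(x)$. Then
\begin{equation*}
|\Phi^\theta_t(x)-\Phi_t(x)|\le\big|F_\theta^{-1}(\Psi_t y_\theta)-F^{-1}(\Psi_t y_\theta)\big|+\big|F^{-1}(\Psi_t y_\theta)-F^{-1}(\Psi_t y)\big| .
\end{equation*}
\begin{itemize}
\item \emph{First term.} This is the inverse-approximation error evaluated at $\Psi_t(y_\theta)$. It is bounded by $\varepsilon'$ once we know that $\Psi_t(y_\theta)$ lies in the set $\Psi_{[0,T]}(F(K))$ where the sup defining $\varepsilon'$ is taken.
\item \emph{Second term.} Apply Lipschitz continuity twice: $F^{-1}$ with constant $\Lip(F^{-1})$, then $\Psi_t$ with constant $\Lip(\Psi_t|_{F(K)})$. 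This gives the bound $\Lip(F^{-1})\Lip(\Psi_t|_{F(K)})\,|y_\theta-y|$, and $|y_\theta-y|\le\varepsilon$ by hypothesis.
\end{itemize}
Summing the two bounds gives the stated inequality.

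The main obstacle is the domain issue. The point $y_\theta$ lies within $\varepsilon$ of $F(K)$ but need not lie in it. So $\Psi_t(y_\theta)$ need not be in $\Psi_{[0,T]}(F(K))$, and the Lipschitz constant of $\Psi_t$ restricted to $F(K)$ need not apply to the pair $(y_\theta,y)$. I would handle both by reading the suprema and Lipschitz constants on a closed $\varepsilon$-neighbourhood $N_\varepsilon$ of $F(K)$ and of its $\Psi_{[0,T]}$-image. This is the natural interpretation: $F(K)$ is $\Psi$-invariant because $K$ is $\Phi$-invariant, and for small $\varepsilon$ the neighbourhood changes the constants only slightly by continuity. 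If $F(K)$ is convex, or if $\Psi_t$ is globally Lipschitz (as for translations, linear flows, and the action--angle flow on all of $\R^n$), no enlargement is needed for the second term. I would state this convention explicitly at the start of the proof.

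For the ``in particular'' clause I would bound $\Lip(\Psi_t)$ for each reference in turn.
\begin{itemize}
\item \emph{Translation.} $\Psi_t$ is an isometry, so $\Lip(\Psi_t)=1$.
\item \emph{Non-expanding reference on a compact set.} Here $\Lip(\Psi_t)\le1$ by definition.
\item \emph{Action--angle reference.} Differentiate $\Psi_t(y)=R(\omega(I)t)\,y$ blockwise. The derivative is $R(\omega(I)t)$, which is orthogonal, plus $t\,R'(\omega(I)t)\,y\,(D\omega(I)\nabla I)^{\top}$. On the compact set $F(K)$ both $|y|$ and $|\nabla I|$ are bounded, so $\Lip(\Psi_t|_{F(K)})\le1+Ct\sup|\omega'|$. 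The bound therefore grows at most linearly in $t$, with uniformity in $t$ up to that linear factor.
\end{itemize}
In each case $\Lip(F^{-1})$ and $\varepsilon'$ are independent of $t$, which completes the argument.
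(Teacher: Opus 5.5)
Your argument is the paper's proof: the same triangle inequality through $F^{-1}(\Psi_t F_\theta x)$, the same Lipschitz chain $\Lip(F^{-1})\Lip(\Psi_t)\,|F_\theta x-Fx|$ for the second term, and the same case check (an isometry for the translation, a rotation plus a shear linear in $t$ for the action--angle flow). The domain issue you flag, that $F_\theta(x)$ need not lie in $F(K)$, is real and the paper passes over it, so your $\varepsilon$-neighbourhood convention is a legitimate tightening; only drop the parenthetical claim that the action--angle flow is globally Lipschitz on $\R^n$, which fails in general (for $\omega$ affine in $I$ the shear term $t\,\omega'(I)R'(\omega t)\,y\,y^{\top}$ of $D\Psi_t$ grows like $|y|^2$), and use a bounded convex neighbourhood of $F(K)$ there instead.
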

\begin{proof}
$|F_\theta^{-1}(\Psi_tF_\theta x)-F^{-1}(\Psi_tFx)|\le|F_\theta^{-1}(\Psi_tF_\theta x)-F^{-1}(\Psi_tF_\theta x)|+|F^{-1}(\Psi_tF_\theta x)-F^{-1}(\Psi_tFx)|$, and the second term is at most $\Lip(F^{-1})\Lip(\Psi_t)|F_\theta x-Fx|$. For the translation $\Lip(\Psi_t)=1$; for the action--angle flow $D\Psi_t$ is a rotation plus a shear of size $|\omega'|\,|y|\,t$.
\end{proof}
The learned flow does not accumulate error because it does not step: the only amplification is the intrinsic one of the reference, which by Proposition~\ref{prop:invariants} is also that of the target. The bound is conditional on a small \emph{uniform} error of $F_\theta$, and the experiments show that this is where the difficulty lies.

\section{Experiments}\label{sec:experiments}
All experiments use one code base (PyTorch, double precision, CPU). $F_\theta$ has six to twelve affine coupling blocks with two hidden layers of width $64$ (eight gradient modules of width $64$ in the symplectic case). Training is Adam with cosine decay ($6\,000$--$15\,000$ steps, fresh collocation batches of $2\,048$ points) followed by L-BFGS on a fixed batch. Reference solutions use an eighth-order Runge--Kutta method at tolerance $10^{-11}$. We report the mean trajectory error over the test initial conditions and horizon. The group-law violation $\max_x|\Phi^\theta_s\Phi^\theta_tx-\Phi^\theta_{s+t}x|$ and the inverse consistency $\max_x|F_\theta^{-1}F_\theta x-x|$ are at rounding level ($10^{-15}$ to $10^{-13}$) in every run, as they must be by construction, and are not repeated. In the figures, black lines are reference solutions and red dashed lines the learned flow; the ``latent'' panels show $F_\theta$ applied to true trajectories. Table~\ref{tab:overview} lists the experiments and their outcome; this section presents the final configuration of each, Section~\ref{sec:failures} the runs that led to it. The complete chronology of every run, including those not shown here, is in the experiment report distributed with the code (Appendix~\ref{app:report}).

\begin{table}[t]
\caption{Overview. Regime: (1) $f$ known, (2) time series only, (3) $f$ known up to $\alpha$. ``Seeds'' is the number of seeds the final configuration was run with.}
\label{tab:overview}
\centering\footnotesize
\begin{tabular}{llllp{3.3cm}cp{4.1cm}}
\toprule
\# & system & $n$ & regime & reference $\Psi$; learned in $\Psi$ & seeds & outcome \\
\midrule
1 & dispersive field & 2 & 1 & translation; -- & 1 & correct, error flat in $t$; time-input PINN more accurate inside its horizon \\
2 & nonlinear saddle & 2 & 1 & diagonal linear; eigenvalues & 1 & spectrum $\expTwoLamJ$ with Jacobian anchor \\
3 & pendulum (librations) & 2 & 1 & action--angle; $\omega(I)$ & 1 & frequency map recovered; $\omega(0)$ unanchored \\
4 & Van der Pol, data only & 2 & 2 & decoupled limit cycle; rates, Floquet exponent, frequencies, radius & 1 & open: conjugacy on the data region, Floquet exponent $\expFourGInvErrFloquet\%$ off \\
5 & Lotka--Volterra & 2 & 3 & action--angle; $\omega(I)$, $\alpha=(a,b,c,d)$ & 1 & parameters to $\expFiveAlphaErr$ \\
6 & Duffing (Hamiltonian) & 2 & 1 & action--angle; $\omega(I)$; symplectic $F_\theta$ & 1 & exactly symplectic; RealNVP: one of three seeds diverged \\
7 & 3-D nonlinear sink & 3 & 1 & $e^{At}$, full $A$; spectrum & 3 & spectrum to $\expSevenSeedsEigErrMax$ (R1) \\
8 & 6-D nonlinear sink & 6 & 1 & $e^{At}$, full $A$; spectrum & 3 & all six eigenvalues within $\expEightSeedsEigRelMax\%$ (R1) \\
9 & 4-D integrable Hamiltonian & 4 & 1 & action--angle, $\omega=\omega_0+\nabla K$; symplectic $F_\theta$ & 3 & frequencies and actions to $<1\%$, exactly symplectic (R2, R3) \\
\bottomrule
\end{tabular}
\end{table}

\subsection{Sinks in three and six dimensions: recovering the spectrum (R1)}\label{sec:sinks}
\emph{Three dimensions.} The system $\dot x_1=-\tfrac12x_1-2x_2+x_3^2$, $\dot x_2=2x_1-\tfrac12x_2+x_1x_3$, $\dot x_3=-x_3+\tfrac12x_1x_2$ has a sink at the origin with spectrum $\{-\tfrac12\pm2i,-1\}$. The reference is $e^{At}$ with a full learnable $A\in\R^{3\times3}$ initialized near $-0.3I$, trained with R1 on $[-1,1]^3$ ($8\,000$ Adam steps, $500$ L-BFGS iterations). The spectrum converged to $\{\expSevenEigR\}$, a maximal deviation of $\expSevenEigErrR$ from the true one; the imaginary part passed $1.9$ at iteration $\expSevenItImR$ and the final approach happened during L-BFGS (Figure~\ref{fig:exp7b}(e)). The equilibrium is at $\expSevenFpR$ from the origin. The diagnostics of Section~\ref{sec:diagnostics} pass: the relative residual on spheres is $\expSevenRelTinyR$ at $|x|=10^{-3}$, $\expSevenRelSmallR$ at $10^{-2}$ and $\expSevenRelMidR$ at $10^{-1}$, flat across the decades, and $\|DF_\theta(x)-I\|$ grows linearly from $\expSevenJacDevTinyR$ at $10^{-3}$ through $\expSevenJacDevSmallR$ at $10^{-2}$ to $\expSevenJacDevMidR$ at $10^{-1}$ (panel (f)). The trajectory error over $t\in[0,8]$ from the sphere of radius $0.8$ is $\expSevenErrR$ (maximum $\expSevenErrMaxR$, relative $L^2$ error $\expSevenRelLR$) and decreases in $t$ as trajectories enter the region where the fit is best; the latent view shows the true trajectories as logarithmic spirals of $e^{At}$ (panel (c)). Training took $\expSevenTimeR$ minutes.
\begin{figure}[t]
\centering\includegraphics[width=0.88\linewidth]{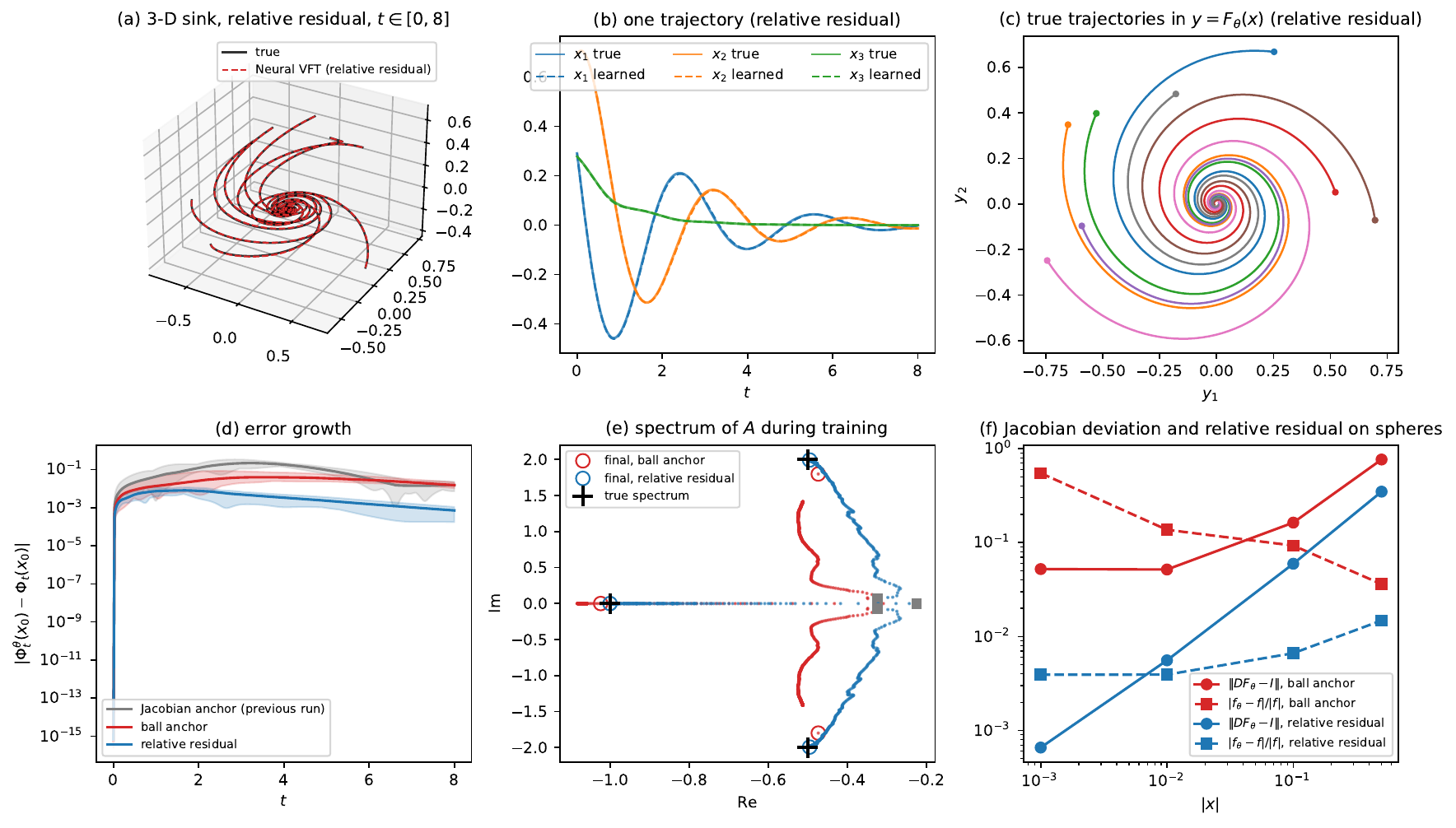}
\caption{Three-dimensional sink, R1. (a) Eight trajectories from the sphere of radius $0.8$. (b) One trajectory, componentwise. (c) True trajectories in the learned coordinates: spirals of $e^{At}$. (d) Error growth for the earlier Jacobian-anchored run (Section~\ref{sec:failures}) and the two runs of the final comparison (ball anchor, relative residual), mean and range over eight trajectories. (e) Spectrum of $A$ during training (gray squares: initial, crosses: true, circles: final). (f) $\|DF_\theta(x)-I\|$ and the relative residual $|f_\theta-f|/|f|$ averaged over spheres of radius $|x|$.}
\label{fig:exp7b}
\end{figure}

\emph{Six dimensions.} The system $\dot x=Ax+\tfrac12(x_3^2,\;x_1x_5,\;x_4x_6,\;x_2^2,\;x_1x_2,\;x_3x_5)$ with block-diagonal $A$ has spectrum $\{\expEightTrueEig\}$: two complex pairs with different damping and rotation rates and two real eigenvalues an order of magnitude apart. The reference is $e^{At}$ with a full learnable $A\in\R^{6\times6}$ ($36$ parameters, of which six are invariants and $30$ centralizer freedom removed by the Jacobian anchor), the transformation a twelve-block coupling network with $\expEightParamsR$ parameters, trained with R1 for $6\,000$ Adam steps and $500$ L-BFGS iterations on $[-1,1]^6$. The learned spectrum is $\{\expEightEigR\}$: every eigenvalue within $\expEightEigErrRelR\%$ (largest absolute deviation $\expEightEigErrR$, on the eigenvalue $-2$), with both complex pairs and the slow real direction resolved. The equilibrium offset is $\expEightFpR$, the relative residual is flat at $\expEightRelTinyR$--$\expEightRelMidR$ from $|x|=10^{-3}$ to $10^{-1}$, the Jacobian deviation grows from $\expEightJacDevTinyR$ at $10^{-3}$ to $\expEightJacDevMidR$ at $10^{-1}$ (Figure~\ref{fig:exp8}(f)), and the eight test trajectories over $t\in[0,8]$ have mean error $\expEightErrR$ (maximum $\expEightErrMaxR$, relative $L^2$ error $\expEightRelLR$). Training took $\expEightTimeR$ minutes. Doubling the dimension therefore changed neither the recipe nor the budget.
\begin{figure}[t]
\centering\includegraphics[width=0.88\linewidth]{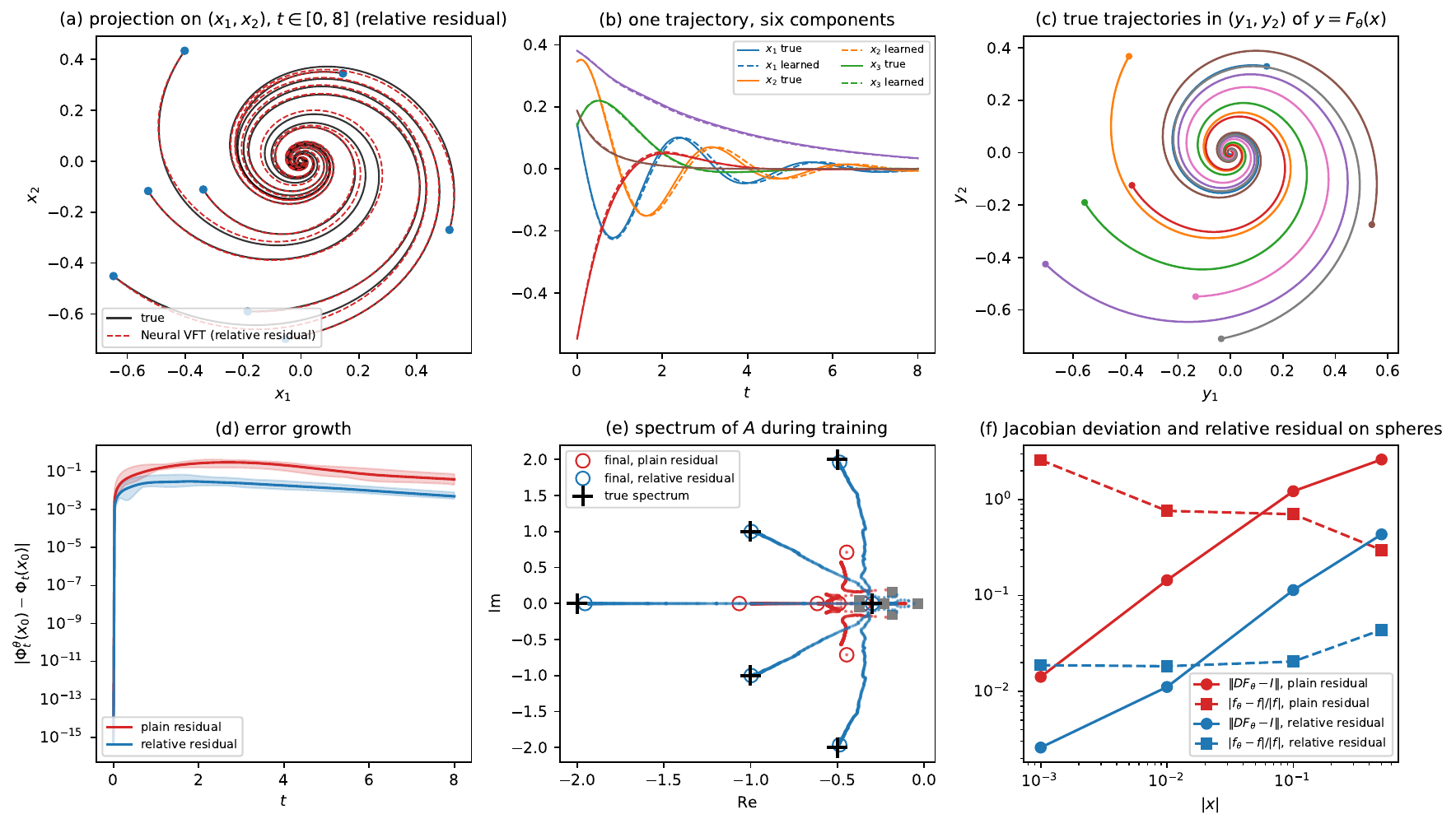}
\caption{Six-dimensional sink, R1 against the plain residual. (a) Eight trajectories from the sphere of radius $0.8$, projected on $(x_1,x_2)$. (b) One trajectory, six components. (c) True trajectories in the first two learned coordinates. (d) Error growth. (e) Spectrum of $A$ during training of both runs (gray squares: initial, crosses: true, circles: final). (f) $\|DF_\theta(x)-I\|$ and the relative residual on spheres of radius $|x|$.}
\label{fig:exp8}
\end{figure}

\subsection{Hamiltonian systems: symplectic transformation and action--angle variables (R2, R3)}\label{sec:hamiltonian}
\emph{Duffing oscillator.} For $H=\tfrac12p^2+\tfrac12q^2+\tfrac14q^4$ we compare a symplectic $F_\theta$ with a RealNVP $F_\theta$, both with the action--angle reference and the equation loss on $H<3$. By Proposition~\ref{prop:flow}(iii) both learned flows conserve $|F_\theta|^2$ exactly; by (iv) only the symplectic one is Hamiltonian. Over $t\in[0,100]$ (up to $16$ periods) the symplectic model has trajectory error $\expSixErrS$, growing slowly through a phase drift, maximal energy error $\expSixEnergyS$ with no secular trend, and symplecticity defect $\|D\Phi_t^{\theta\top}JD\Phi_t^\theta-J\|=\expSixSympS$ at $t=50$ over $500$ points (Figure~\ref{fig:exp6}). The RealNVP run with the same seed diverged (trajectory error $\expSixErrR$); two further seeds converged (\expSixSeeds), with long-horizon errors larger than the symplectic model's. Section~\ref{sec:failures} traces the divergent seed to the orientation barrier.
\begin{figure}[t]
\centering\includegraphics[width=0.72\linewidth]{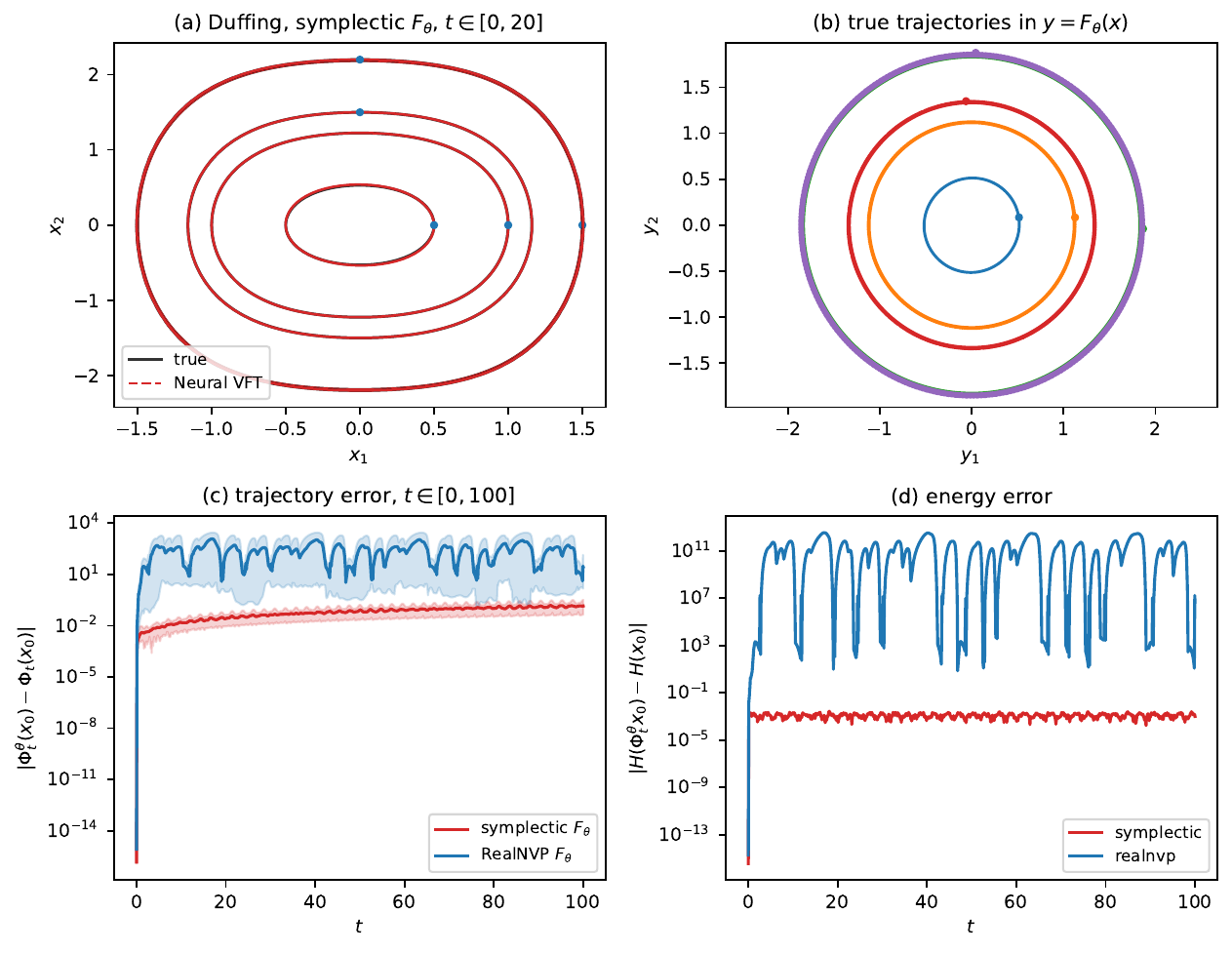}
\caption{Duffing oscillator. (a) Orbits with the symplectic transformation. (b) True trajectories in its learned coordinates. (c) Trajectory error over $t\in[0,100]$ for the two architectures (the RealNVP run shown is the divergent seed). (d) Energy error.}
\label{fig:exp6}
\end{figure}

\emph{Four-dimensional integrable system.} The system is a pair of Duffing oscillators, $H_0=\sum_{i=1}^2\tfrac12p_i^2+\tfrac12q_i^2+\tfrac14q_i^4$, written in the coordinates $z'=G(z)$ of a fixed nonlinear symplectic map $G=S_p\circ S_q$ with $S_q\colon q\mapsto q+\nabla_pW(p)$, $W=\tfrac3{20}(p_1p_2)^2$, and $S_p\colon p\mapsto p+\nabla_qU(q)$, $U=\tfrac3{20}(q_1q_2)^2$. The field in $z'$ couples all four components, but its invariants are those of two uncoupled Duffing oscillators: the frequency maps $\omega_i(I_i)$, known by quadrature, and the actions themselves. The reference is the action--angle flow with $\omega=\omega_0+\nabla K(I)$, $\omega_0=-1$ read off the clockwise rotation of the field (R3), and two transformations are compared: a symplectic network with eight gradient layers ($\expNineParamsS$ parameters) and a RealNVP with eight coupling blocks ($\expNineParamsR$), both trained with the equation loss on $2\,048$ points per step in $[-1.6,1.6]^4$ with $H<1.5$, $6\,000$ Adam steps and $500$ L-BFGS iterations. \emph{Symplectic $F_\theta$:} equation loss $\expNineLossS$, vector-field error $\expNineVfS$, trajectory error $\expNineErrShortS$ over $t\in[0,20]$ and $\expNineErrS$ over $t\in[0,50]$ (about eight periods; maximum $\expNineErrMaxS$), energy error at most $\expNineEnergyS$, symplecticity error $\expNineSympS$ of the time-$50$ map. The learned frequency maps coincide with the true Duffing map (Figure~\ref{fig:exp9}(f)): median relative error $\expNineFreqMedAS\%$ and $\expNineFreqMedBS\%$ over $500$ collocation points (mean $\expNineFreqErrAS\%$ and $\expNineFreqErrBS\%$, maximum $\expNineFreqErrMaxS\%$ near $I=0$), and the latent actions $\tfrac12(Q_i^2+P_i^2)$ of $F_\theta(x)$ agree with the true action integrals of the tori through $x$ to $\expNineActMedAS\%$ and $\expNineActMedBS\%$ (median over points with $I>0.1$): the symplectic conjugacy recovers the action--angle variables themselves. \emph{RealNVP $F_\theta$:} equation loss $\expNineLossR$, trajectory error $\expNineErrR$, energy error up to $\expNineEnergyR$, frequencies again correct to $\expNineFreqMedAR\%$ and $\expNineFreqMedBR\%$, but the time-$50$ map is not symplectic (error $\expNineSympR$) and the latent actions are $\expNineActMedAR\%$ and $\expNineActMedBR\%$ off, as Section~\ref{sec:losses} anticipates. Training took $\expNineTimeS$ and $\expNineTimeR$ minutes.
\begin{figure}[t]
\centering\includegraphics[width=0.88\linewidth]{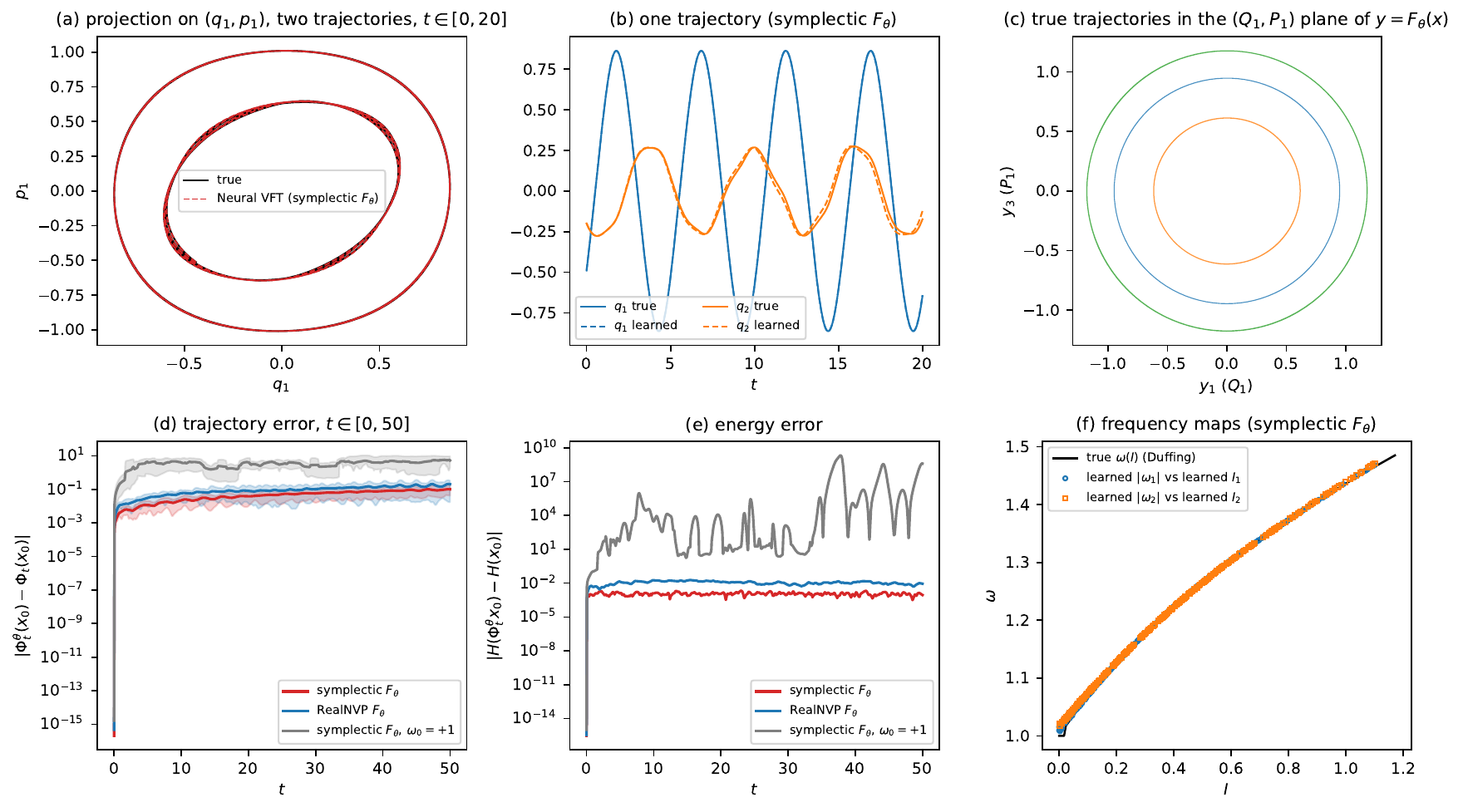}
\caption{Four-dimensional integrable Hamiltonian system. (a) Two trajectories projected on $(q_1,p_1)$, $t\in[0,20]$. (b) One trajectory, $q_1$ and $q_2$. (c) True trajectories in the $(Q_1,P_1)$ plane of the learned coordinates. (d) Trajectory error over $t\in[0,50]$ for both transformations and for the run with the wrong orientation (gray). (e) Energy error. (f) Learned frequencies against learned actions for the two degrees of freedom, with the true Duffing frequency map.}
\label{fig:exp9}
\end{figure}

\subsection{Seed robustness}\label{sec:seeds}
Experiment~6 showed that a single seed can mislead. The three runs that carry the method's claims, R1 on the three- and the six-dimensional sink and R2--R3 on the four-dimensional Hamiltonian system, were repeated with seeds $1$ and $2$ and everything else unchanged (seed changes the initialization of $F_\theta$ and of the reference, the collocation points and the L-BFGS batch). Table~\ref{tab:seeds} lists the per-seed numbers. On the six-dimensional sink all seeds recover the spectrum with a largest relative eigenvalue error of $\expEightSeedsEigRelMax\%$; the seed-$0$ run reported above is the least accurate of the three. On the Hamiltonian system the seeds agree to within $25\%$ on every metric, with frequency maps correct to a median of at most $\expNineSeedsFreqMax\%$ and actions to at most $\expNineSeedsActMax\%$, and no trace of the orientation barrier once $\omega_0$ is initialized from the field. On the three-dimensional sink the seeds are indistinguishable (learned spectra \expSevenSeedsSpectra{} against $\{-0.5\pm2i,-1\}$). These three results are the ones this paper claims as recipes.
\begin{table}[t]
\caption{Seed robustness; each cell lists seed 0 / seed 1 / seed 2. Spectrum deviation: largest distance between a learned eigenvalue and the nearest true one. Trajectory error: mean over the test trajectories and horizon of the experiment. Frequency and action errors: median relative error over $300$ collocation points, two degrees of freedom each.}
\label{tab:seeds}
\centering\scriptsize
\begin{tabular}{p{2.9cm}p{2.9cm}p{3.9cm}p{3.9cm}}
\toprule
 & 3-D sink, R1 & 6-D sink, R1 & 4-D Hamiltonian, symplectic $F_\theta$ \\
\midrule
spectrum deviation & $\expSevenSeedsEigErr$ & $\expEightSeedsEigErr$ & -- \\
largest relative eigenvalue error (\%) & -- & $\expEightSeedsEigRel$ & -- \\
equilibrium offset $|F_\theta^{-1}(0)|$ & $\expSevenSeedsFp$ & $\expEightSeedsFp$ & -- \\
trajectory error (mean) & $\expSevenSeedsErr$ & $\expEightSeedsErr$ & $\expNineSeedsErr$ \\
equation loss & -- & -- & $\expNineSeedsLoss$ \\
energy error (max) & -- & -- & $\expNineSeedsEnergy$ \\
symplecticity error, $t=50$ & -- & -- & $\expNineSeedsSymp$ \\
frequency error, median (\%) & -- & -- & $\expNineSeedsFreq$ \\
action error, median (\%) & -- & -- & $\expNineSeedsAct$ \\
training time (min) & $\expSevenSeedsTime$ & $\expEightSeedsTime$ & $\expNineSeedsTime$ \\
\bottomrule
\end{tabular}
\end{table}

\subsection{Planar experiments with a single seed}\label{sec:planar}
\emph{Pendulum: a learnable frequency--amplitude relation.} Librations of $\ddot q=-\sin q$ fill $E=\tfrac12p^2+1-\cos q<2$, a nonlinear center whose period grows without bound at the separatrix. With the action--angle reference and a learnable $\omega(I)$, trained with the equation loss on $E<1.8$, the trajectory error over $t\in[0,30]$ (about five periods at the largest amplitude $q_0=2.4$) is $\expThreeErrL$ and essentially flat in $t$; the fixed-$\omega$ ablation, i.e.\ the harmonic-oscillator reference that Proposition~\ref{prop:invariants}(c) rules out, has error $\expThreeErrFixed$ because the phases drift; a time-input baseline trained on $t\in[0,5]$ reaches $\expThreeErrBaseShort$ on its horizon and $\expThreeErrBase$ over $[0,30]$. The learned $\omega(I)$ reproduces $2\pi/T(q_0)$ over most of the libration region (Figure~\ref{fig:exp3}(c)); in the run shown it bends away from the correct limit at small amplitude (relative error $\expThreeFreqErr$ at the smallest amplitudes), while a retrained copy with the same settings, differing through thread-level non-determinism, did not (maximal relative frequency error $\expThreeFreqErrLarge$, at the edge of the training region; trajectory error $\expThreeBErr$). The run-dependent deviation at $I\to0$ is exactly what R1 addresses: $|f|$ vanishes at the equilibrium and nothing in the plain loss pins $\omega(0)$.
\begin{figure}[t]
\centering\includegraphics[width=0.72\linewidth]{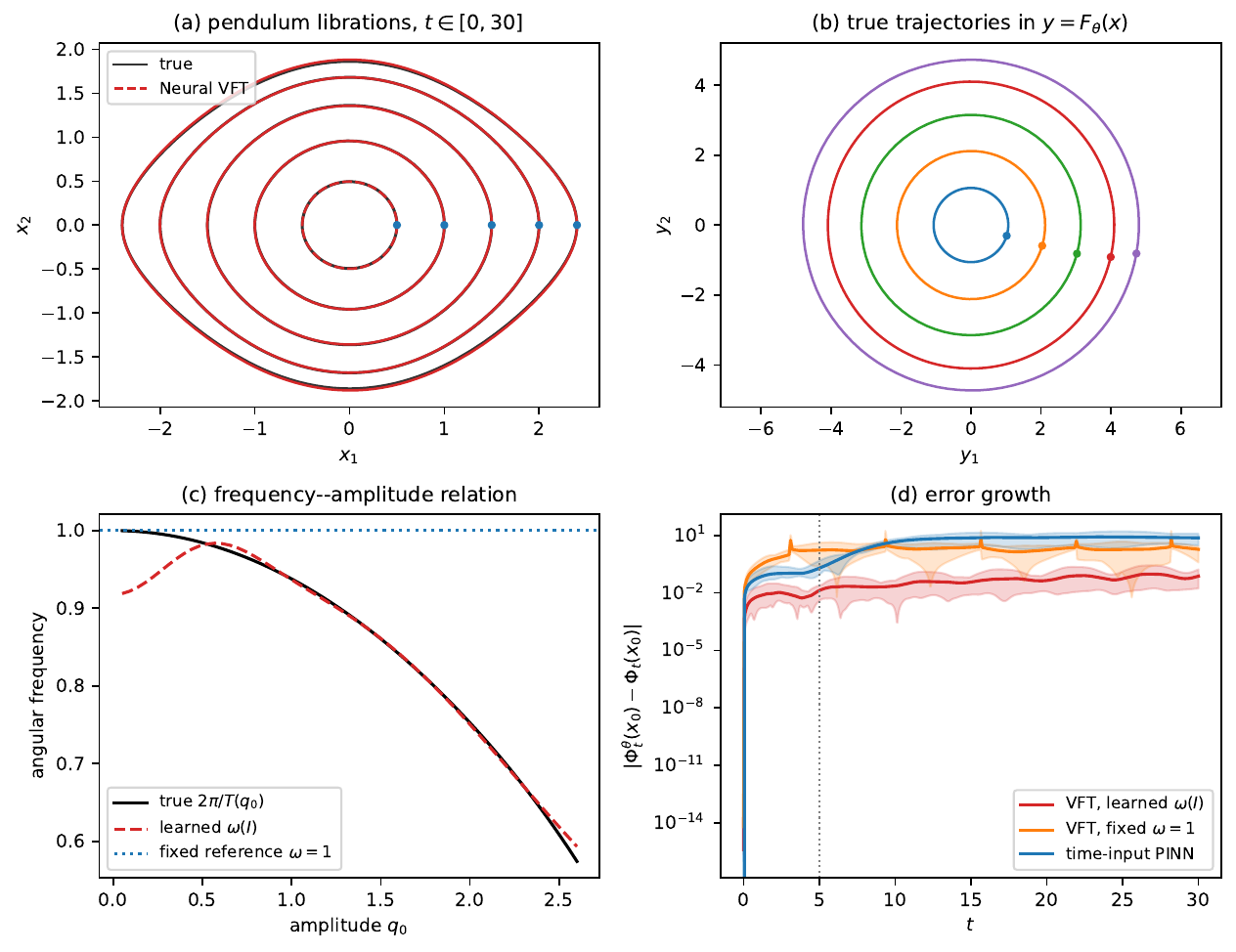}
\caption{Pendulum. (a) Librations with amplitudes $q_0\in\{0.5,1,1.5,2,2.4\}$ over $t\in[0,30]$. (b) True trajectories in the learned coordinates. (c) Learned $\omega(I)$ along $(q_0,0)$ against the exact frequency $2\pi/T(q_0)$. (d) Error growth for the three models.}
\label{fig:exp3}
\end{figure}

\emph{Lotka--Volterra: parameter identification from noisy data.} For $\dot x=ax-bxy$, $\dot y=-cy+dxy$ with true $(a,b,c,d)=(1,0.5,0.8,0.4)$ we observe $12$ noisy trajectories (noise standard deviation $\expFiveNoise$, length $15$, $\Delta t=0.1$) and treat all four parameters as unknown, initialized at $(0.5,1,0.5,1)$; the loss combines \eqref{eq:ldat} with the equation residual for $f(\cdot;\alpha)$ at collocation points near the data, and the reference is the action--angle flow. The identified parameters are $(\expFiveAlpha)$, a maximal relative error of $\expFiveAlphaErr$; the data loss reached the noise floor of the pairwise residual; the learned equilibrium $F_\theta^{-1}(0)=\expFiveFp$ recovers $(2,2)$; the trajectory error is $\expFiveErrShort$ on the data horizon and $\expFiveErr$ over $t\in[0,40]$ (Figure~\ref{fig:exp5}), the only visible deviation being on the outermost test orbit, part of which lies outside the data. In this regime the method behaves as a physics-informed network with an exact flow structure.
\begin{figure}[t]
\centering\includegraphics[width=0.9\linewidth]{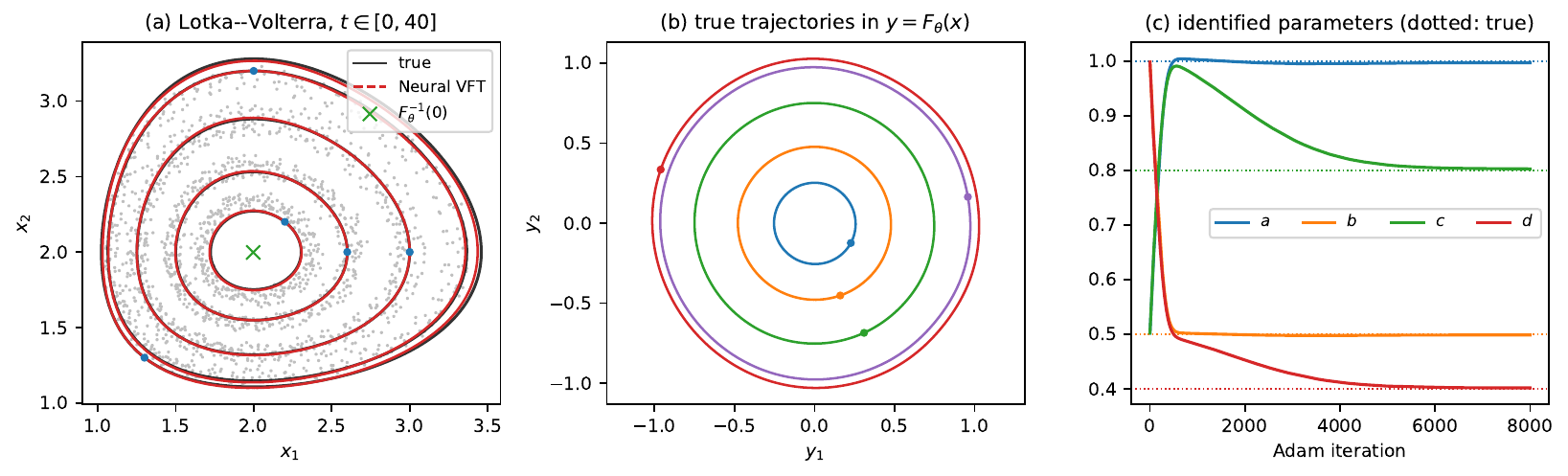}
\caption{Lotka--Volterra. (a) Test trajectories over $t\in[0,40]$ with the noisy training data (gray) and the learned equilibrium. (b) True trajectories in the learned coordinates. (c) Convergence of the four identified parameters.}
\label{fig:exp5}
\end{figure}

\emph{A nonlinear saddle: learning the stability type.} The saddle $\dot x_1=x_1+x_2^2$, $\dot x_2=-x_2$ is trained on $[-1.5,1.5]^2$ with the diagonal linear reference $\lambda$ initialized at $(-0.5,-0.5)$, a sink, so that the stability type has to be discovered. With the anchors $F_\theta(0)=0$, $DF_\theta(0)=I$ the eigenvalues converge to $\expTwoLamJ$, the equilibrium stays at $\expTwoFpJ$, the equation loss reaches $\expTwoLossJ$, the vector-field error $\expTwoVfJ$ and the trajectory error $\expTwoErrJ$ (Figure~\ref{fig:exp2}(a,b)). The two runs without the Jacobian anchor, also in Figure~\ref{fig:exp2}, failed and are the first two entries of Section~\ref{sec:failures}.
\begin{figure}[t]
\centering\includegraphics[width=0.72\linewidth]{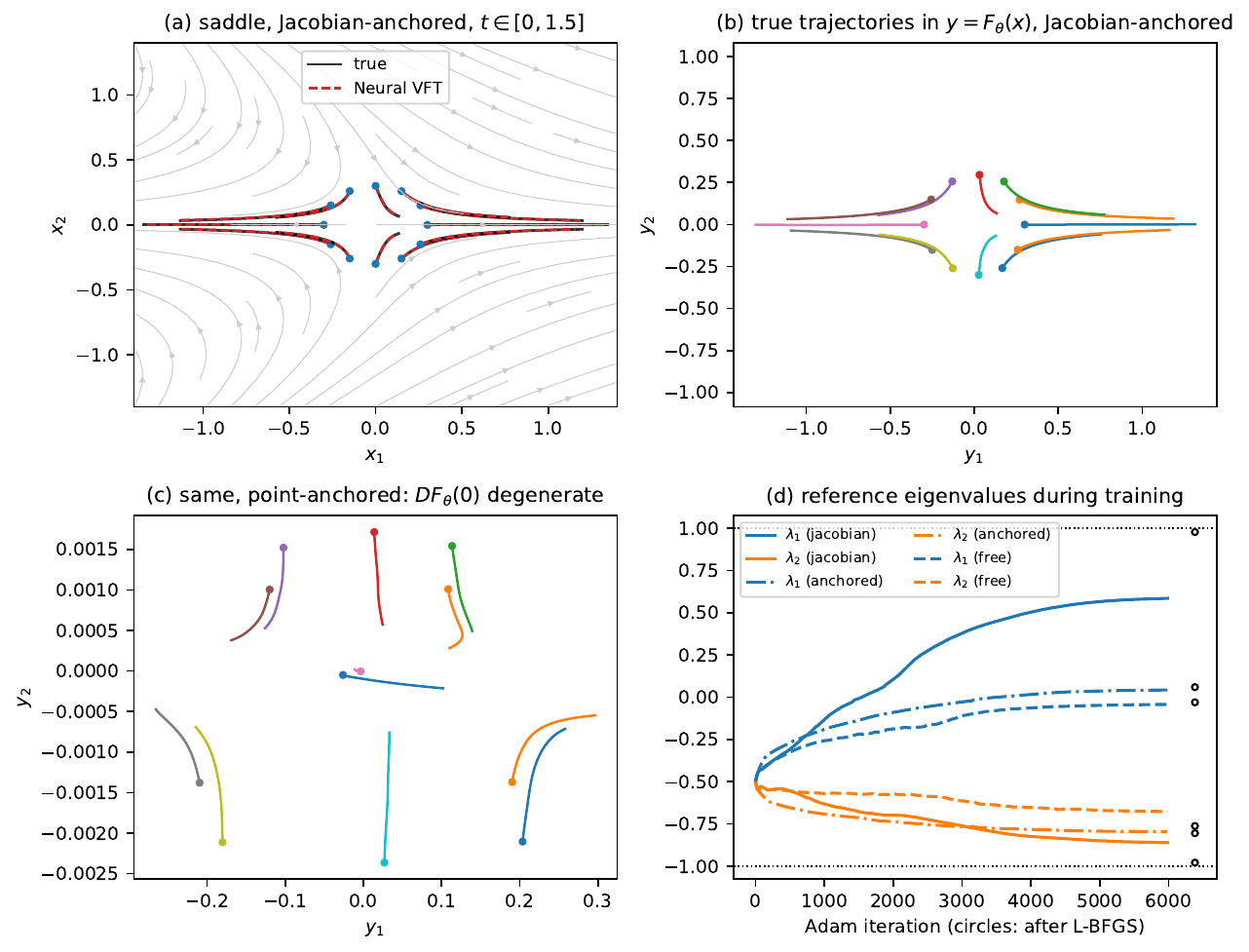}
\caption{Nonlinear saddle. (a) True and learned (Jacobian-anchored) trajectories over the stream plot of $f$. (b) True trajectories in the learned coordinates of the Jacobian-anchored model (hyperbolae). (c) The same for the point-anchored model: the transverse coordinate has collapsed. (d) Reference eigenvalues during training for the free, point-anchored and Jacobian-anchored runs; dotted lines mark $\pm1$, circles the values after L-BFGS.}
\label{fig:exp2}
\end{figure}

\subsection{Where the method is weaker}\label{sec:weaker}
\emph{A dispersive field against a time-input network.} The system $\dot x_1=1+\tfrac12\sin x_2$, $\dot x_2=\tfrac45\cos x_1+\tfrac15\sin x_2$ has $\dot x_1\geq\tfrac12$ and is globally trivializable (Proposition~\ref{prop:global}). We train the translation reference with the equation loss on $[-3,3]\times[-4.5,4.5]$ and the section anchor on $\{x_1=0\}$, and compare with a time-input network $\Phi_\theta(t,x)=x+t\,\mathrm{NN}(t,x)$ trained with the PINN residual on $t\in[0,1.5]$ with the same budget. The best transformation (twelve blocks, scale bound $1$, $\expOneDeepParams$ parameters, $15\,000$ Adam steps and $1\,000$ L-BFGS iterations) reaches an equation loss of $\expOneDeepEqLoss$ and a trajectory error of $\expOneDeepErr$ over $t\in[0,2.5]$ (maximum $\expOneDeepMax$), flat in $t$ as Proposition~\ref{prop:error} predicts, with a group-law violation of $\expOneGroupVft$ against $\expOneGroupBase$ for the baseline (Figure~\ref{fig:exp1b}). The baseline, however, reaches $\expOneBaseInRangeErr$ inside its horizon, an order of magnitude better, and $\expOneBaseErr$ over the full horizon, where its error grows; it trained in $\expOneTimeBase$ minutes against $\expOneDeepTime$. On this easy problem the conjugacy representation is harder to optimize than the direct one: the loss involves the Jacobian of a deep invertible map and the fit is an $H^1$-type fit of a time function and a first integral whose gradients vary by an order of magnitude across the box. An eight-block, width-$96$ transformation with $\expOneRelParams$ parameters plateaued at $\expOneVftErr$ (plain) and $\expOneRelErr$ (relative residual), so the gain came from depth with a tighter per-block scale, not from parameters or from R1.
\begin{figure}[t]
\centering\includegraphics[width=0.9\linewidth]{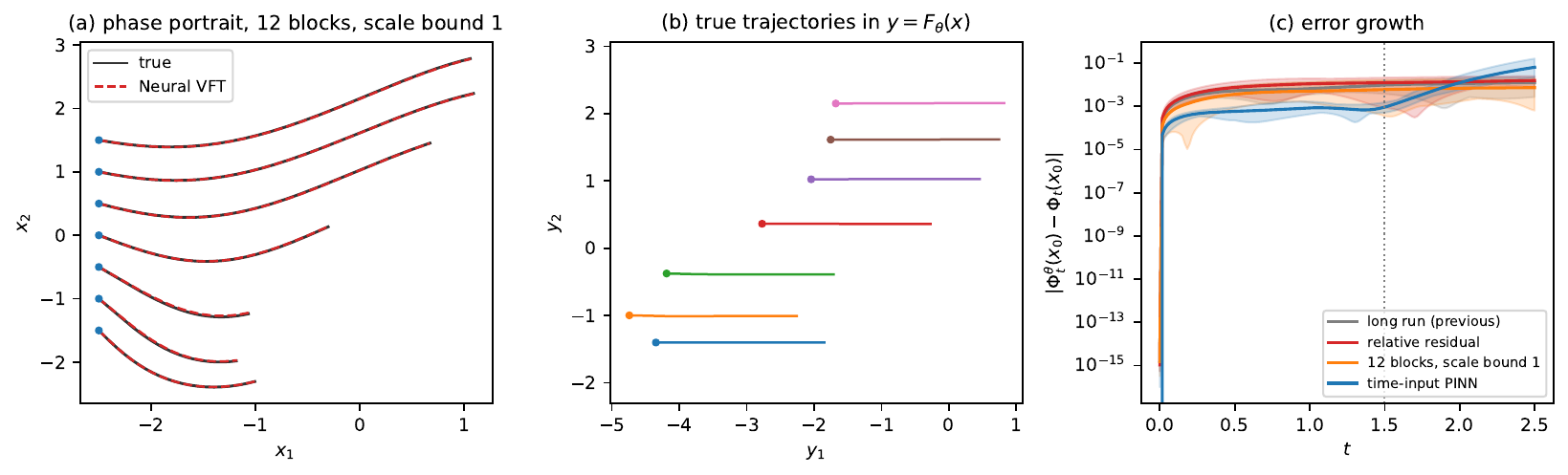}
\caption{Dispersive field. (a) True and learned trajectories of the twelve-block model. (b) Its latent view: straight lines traversed at unit speed. (c) Mean (line) and range (band) of the trajectory error for the eight-block long run, the relative-residual variant, the twelve-block model and the time-input baseline; the dotted line marks the baseline's training horizon.}
\label{fig:exp1b}
\end{figure}

\emph{Van der Pol from data only.} The oscillator $\ddot x=\mu(1-x^2)\dot x-x$ with $\mu=1.5$ has an unstable focus with eigenvalues $\expFourTrueRate\pm\expFourTrueOmegaO\,i$ and an attracting cycle of period $\expFourTruePeriod$ (frequency $\expFourTrueOmegaC$) with Floquet exponent $\expFourTrueFloquet$. The field is not used: $40$ trajectories of length $12$ sampled at $\Delta t=0.05$ from $[-2.5,2.5]^2$ with additive noise of standard deviation $0.005$ are the only input (noise floor of the loss $5\times10^{-5}$). The final configuration, reached after six earlier runs (Section~\ref{sec:failures}), uses the decoupled reference of Section~\ref{sec:reference} with its four invariants initialized from the data as in R3 (estimates $(\expFourEEstInv)$, within $\expFourEEstErrMin$--$\expFourEEstErrMax\%$ of the true $(\expFourBTrueInv)$, ordered as origin rate, Floquet exponent, origin frequency, cycle frequency) and frozen for the first $3\,000$ steps, a twelve-block scale-bound-$1$ transformation, the point anchor $F_\theta(0)=0$, pairs at most $0.5$ time units apart with the lag doubled when the running mean of the loss falls below ten times the noise floor, and stratified pairs: of the $9\,600$ starting points only $\expFourStrataNear\%$ lie within $|x|<1$ of the focus and $\expFourStrataTrans\%$ in transit, the rest on the cycle, so each batch takes a quarter from each of the first two strata. The outcome: the invariants ended at $(\expFourGInv)$, the origin rate within $\expFourGInvErrRate\%$ and the cycle frequency within $\expFourGInvErrWcDec\%$, the origin frequency $\expFourGInvErrWo\%$ and the Floquet exponent $\expFourGInvErrFloquet\%$ off. On fixed batches under the natural pair distribution the loss is $\expFourGLossOneStep$ for single steps, $\expFourGLossLagHalf$, $\expFourGLossLagOne$, $\expFourGLossLagTwo$ and $\expFourGLossLagFour$ for lags up to $0.5$, $1$, $2$ and $4$ (the last two never trained on). The five test trajectories inside the sampled box have mean error $\expFourGErrInBox$ over $t\in[0,20]$, almost three periods (final $\expFourGErrInBoxFinal$), the learned field has relative $L^2$ error $\expFourGVfInner$ on $|x|<2$ (median $\expFourGVfMedian$), and the latent view shows true trajectories spiraling onto a circle (Figure~\ref{fig:exp4f}). A sixth trajectory started outside the box diverges and the empty corners of the box are wrong (overall $\expFourGErr$ and $\expFourGVf$), as they must be without data. We call this a conjugacy on the data region, not a solved identification: the Floquet exponent is not recovered, and Section~\ref{sec:failures} explains why we attribute this to the transformation rather than to the loss.
\begin{figure}[t]
\centering\includegraphics[width=0.88\linewidth]{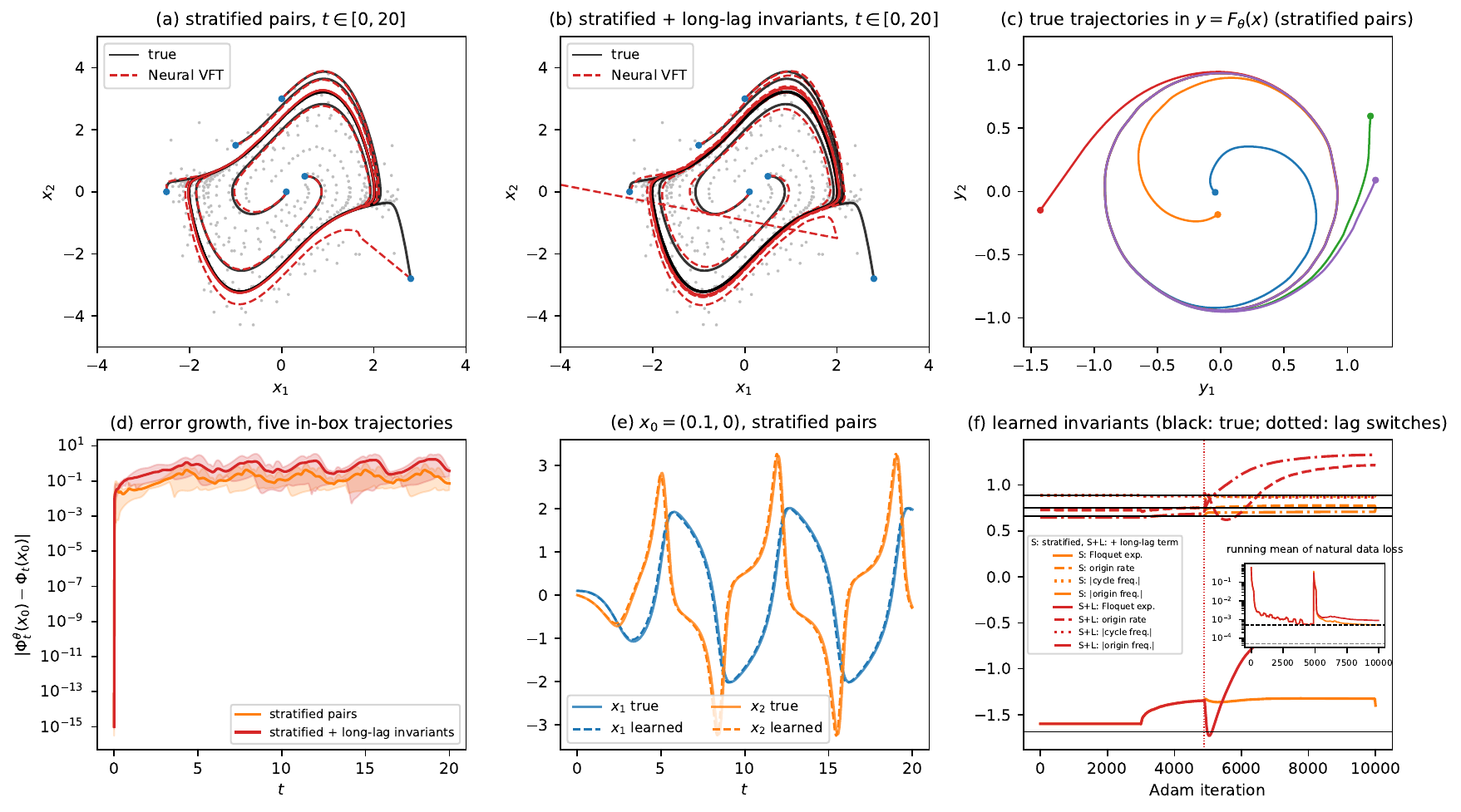}
\caption{Van der Pol from data, final configuration (stratified pairs, invariants initialized from the data) and, in (a,b), the run with an additional long-lag term that failed. (a),(b) Learned flows from six test initial conditions (gray dots: training data; axes clipped); the trajectory from $(2.8,-2.8)$ starts outside the sampled box. (c) True in-box trajectories in the learned coordinates: spirals onto a circle. (d) Error growth over the five in-box trajectories. (e) Time series from $x_0=(0.1,0)$. (f) Learned invariants during training (black lines: true values; dotted line: lag doubling); inset: running mean of the loss under the natural pair distribution with the criterion (black dashed) and the noise floor (gray dashed).}
\label{fig:exp4f}
\end{figure}

\subsection{What did not work and why}\label{sec:failures}
Every configuration above was preceded by runs that failed. We group them by mechanism; the per-run numbers are in the experiment report (Appendix~\ref{app:report}).

\paragraph{The bifurcation barrier.}
Moving a reference eigenvalue from $-0.5$ to $+1$ requires passing through $0$, where the reference is not hyperbolic. Left alone on the saddle, the optimizer avoided the crossing: without an anchor the eigenvalues ended at $\expTwoLamF$ and the reference equilibrium was mapped to $F_\theta^{-1}(0)=\expTwoFpF$, far outside the box, so that a distant sink whose slow direction mimics the unstable direction matched $f$ on the box to a relative error of $\expTwoVfF$ with the wrong phase portrait (trajectory error $\expTwoErrF$). Pinning the equilibrium with $F_\theta(0)=0$ removes this minimum and the eigenvalue crosses zero within the first few hundred iterations (Figure~\ref{fig:exp2}(d)). Initializing from $Df(x^*)$ when it is known avoids the crossing altogether.

\paragraph{Degenerate Jacobians and the circumvented anchor.}
With the equilibrium pinned but the Jacobian free, $\lambda_1$ crossed zero and stalled at $\expTwoLamA$ while the equation loss fell to $\expTwoLossA$. By Proposition~\ref{prop:flow}(ii) the spectrum of the learned field at the equilibrium \emph{is} $\lambda$, so the residual cannot vanish near $0$; the optimizer instead made $DF_\theta(0)$ nearly singular, confining the mismatch to a neighborhood too small for uniform collocation points to see: $F_\theta^{-1}(0)=\expTwoFpA$ although the anchor residual was negligible, and the true trajectories are squeezed into a thin strip in the learned coordinates (Figure~\ref{fig:exp2}(c)); the trajectory error $\expTwoErrA$ is worse than that of the free run. In two dimensions the Jacobian anchor $DF_\theta(0)=I$ removed this. In three dimensions it did not: with $F_\theta(0)=0$, $DF_\theta(0)=I$ and the plain residual the spectrum reached only $\{\expSevenEigJ\}$ (deviation $\expSevenEigErrJ$; the imaginary part $0.84$ instead of $2$), with equation loss $\expSevenLossJ$ and trajectory error $\expSevenErrJ$. The trained model satisfied the anchor at the point and changed its Jacobian to order one within a ball of radius $10^{-2}$ that contains a negligible fraction of uniform collocation points; in that ball the relative residual is of order one, outside it about $0.1$. Without any anchor the three-dimensional spectrum was $\{\expSevenEigF\}$ (deviation $\expSevenEigErrF$, trajectory error $\expSevenErrF$). A \emph{ball anchor} $F_\theta(x)=x$ on $|x|<0.1$ removed the circumvention ($\|DF_\theta-I\|=\expSevenJacDevSmallB$ at $|x|=10^{-2}$) and moved the spectrum to $\{\expSevenEigB\}$ (deviation $\expSevenEigErrB$, trajectory error $\expSevenErrB$), but it is inconsistent with the exact conjugacy by $O(|x|^2)$ and the relative residual at $|x|=10^{-3}$ was still $\expSevenRelTinyB$. Only R1 fixed it (Section~\ref{sec:sinks}). Two observations generalize. First, by the plain equation loss on uniform points the Jacobian-anchored, ball-anchored and R1 models are nearly indistinguishable ($\expSevenLossJ$, $\expSevenLossUniB$, $\expSevenLossUniR$) while their trajectory errors differ by a factor of $25$: the uniform loss is the wrong metric for invariants that live at a point. Second, in six dimensions the plain residual with point anchors failed identically, with one complex pair instead of two ($\{\expEightEigP\}$, deviation $\expEightEigErrP$, trajectory error $\expEightErrP$, nine times that of R1), a relative residual of $\expEightRelTinyP$ at $|x|=10^{-3}$ against $\expEightRelTinyR$ for R1, a Jacobian deviation of $\expEightJacDevMidP$ at $|x|=10^{-1}$ against $\expEightJacDevMidR$, and a uniform loss ($\expEightLossUniP$) within $20\%$ of R1's ($\expEightLossUniR$).

\paragraph{The orientation barrier.}
Remark~\ref{rem:orientation} was learned the hard way. The Van der Pol cycle is traversed clockwise, i.e.\ $\omega<0$ in the convention of Section~\ref{sec:reference}, and the first four Van der Pol runs initialized both frequencies at $+1$. The optimizer had to drive them through zero; in the first run it never did and the model rotated the wrong way, in later runs it did and the frequencies settled with the right sign but at $0.15$--$0.18$ in magnitude, four to five times too slow (final invariants $(\expFourDInv)$ in the fourth run), which a transformation with enough capacity compensated on short-lag pairs by distorting the coordinates (loss $\expFourDLossOneStep$ for single steps, $\expFourDLossLagHalf$ for lags up to $0.5$, $\expFourDLossLagFour$ for lags up to $4$; trajectory error $\expFourDErr$). In the four-dimensional Hamiltonian experiment the first run initialized $\omega_0=+1$; one frequency changed sign and the other did not, the equation loss stalled at $\expNineLossW$, and the rollouts left the energy surface (trajectory error $\expNineErrW$, energy error up to $\expNineEnergyW$; gray curves in Figure~\ref{fig:exp9}(d,e)). In hindsight this also explains the divergent RealNVP seed of the Duffing experiment: its frequency stayed positive with an orientation-preserving $F_\theta$, while the symplectic run of that experiment had driven $\omega(I)$ through zero. The remedy is R3 and costs nothing.

\paragraph{The data-only limit cycle.}
This problem took seven runs and remains open. The first two runs, with trajectories from $[-3,3]^2$ and pairs up to two time units apart, stalled three orders of magnitude above the noise floor (data losses $\expFourCLoss$ with the coupled and $\expFourQLoss$ with the decoupled reference, trajectory errors $\expFourCErr$ and $\expFourQErr$); the latent view showed the true trajectories folded onto a band, not spiraling onto a circle, so the model was an interpolant of short pairs that the exact group law extrapolated to nonsense. A lag curriculum with the point anchor brought the short-lag loss down by an order of magnitude but the long lags undid it; the coupled reference gave up its cycle ($a<0$, invariants $(\expFourBCInv)$), as Remark~\ref{rem:consequences} predicts for a Floquet exponent locked to the origin rate, while the decoupled one kept it. A longer curriculum with a neighborhood anchor plateaued at $\expFourCurPlateauA$ on the shortest lags, $600$ times the noise floor, \emph{before any long lag was introduced}, and a latent-space loss added to it collapsed the interior of the cycle in latent space (latent term $\expFourLatLossLatent$, original-space loss $\expFourLatLoss$), the point-anchor degeneracy in a new guise. The twelve-block scale-bound-$1$ transformation lowered the short-lag plateau eightfold, to $\expFourDRunMeanMin$, still $\expFourDRunMeanRatio$ times the noise floor, so that a lag lengthened by criterion was never lengthened. Initializing the invariants from the data (R3) was the first change that worked: the criterion fired, the cycle frequency was learned to $\expFourEFInvErrWcDec\%$ and the rollouts followed the cycle, but the focus and Floquet invariants drifted \emph{away} from their good initial estimates to $\expFourEFInvErrRate$--$\expFourEFInvErrFloquet\%$ off, because the loss on short pairs near the cycle does not see them. Stratified pairs recovered the focus rate (Section~\ref{sec:weaker}); an additional long-lag loss whose gradient reached only the reference parameters made everything worse (invariants $(\expFourGLInv)$, $\expFourGLInvErrRate$--$\expFourGLInvErrWo\%$ off), because the reference chased a residual it cannot reduce alone and the short-lag loss re-adapted $F_\theta$ to the moved reference.

The seventh run located the obstacle. With the four invariants \emph{frozen at their true values} and only $F_\theta$ trained under the same protocol, the model fits the data worse, not better: re-evaluated on identical batches, the oracle's loss is $\expFourCmpOStep$ for single steps against $\expFourCmpEStep$ for the data-initialized model with learned invariants and $\expFourCmpGStep$ for the stratified one, $\expFourCmpOLagHalf$ against $\expFourCmpELagHalf$ and $\expFourCmpGLagHalf$ for lags up to $0.5$, and only at lags up to $4$, which no model trained on, is the oracle better than the data-initialized model ($\expFourCmpOLagFour$ against $\expFourCmpELagFour$; the stratified model has $\expFourCmpGLagFour$); the in-box rollout errors are $\expFourCmpOInBox$, $\expFourCmpEInBox$ and $\expFourCmpGInBox$. With the invariants pinned, $F_\theta$ has to realize the exact conjugacy between the Van der Pol flow and the reference, and this coupling network trained on short-lag pairs cannot do it to better than a short-lag loss of order $10^{-3}$, twenty times the noise floor; when the invariants are free, the optimizer trades a wrong Floquet exponent for a transformation it can fit, and the data loss, which only sees pairs at most one time unit apart, rewards the trade. The obstacle is therefore the expressivity of the transformation on a relaxation-type cycle, not the identifiability of the Floquet exponent, and the invariants estimated from the data are more trustworthy than their trained values.

\section{Related work}\label{sec:related}
\emph{Conjugation ansatz.} \citet{bizzi2025ncf} propose neural conjugate flows, which are \eqref{eq:ansatz} with an affine reference and a coupling-layer $F$; they prove universality of the affine case after augmentation and report that a fixed affine reference cannot adapt to a limit cycle. Proposition~\ref{prop:invariants} explains that observation, Section~\ref{sec:reference} provides references with the required invariants, and their initialization of $A$ from the Jacobian is one instance of R3. What our experiments add to that line of work is the finding that even with the right invariants available, a plain residual lets the optimizer hide a spectral mismatch next to the equilibrium, and the recipe R1 that prevents it. \emph{Koopman and diffeomorphic linearization.} Koopman-based methods learn coordinates in which the dynamics is linear \citep{budisic2012applied,lusch2018deep}; \citet{bevanda2022diffeomorphic} use a normalizing flow as the diffeomorphism and the Jacobian at the equilibrium as the linear part, which is \eqref{eq:ansatz} with $\Psi_t=e^{At}$ and $A$ fixed. Our linear reference learns $A$, and the centralizer analysis says what the Jacobian anchor removes. \emph{Conjugate mappings.} \citet{bramburger2021conjugate} learn conjugacies of Poincar\'e maps to low-dimensional normal forms with autoencoders; the present work is the continuous-time, invertible-network analog, with the invariants made explicit. \emph{Action--angle networks.} Learned canonical transformations to action--angle variables \citep{li2020neural,bondesan2019learning,daigavane2022learning} are the symplectic special case of our action--angle reference with $d\geq1$; Lemma~\ref{lem:gradient} states the condition under which a learnable multivariate frequency map keeps the reference Hamiltonian, and Experiment~9 shows that a symplectic transformation recovers the actions where a non-symplectic one with the same frequencies does not. \emph{Flow representations.} Neural Flows \citep{bilos2021neuralflows} parametrize $\Phi_\theta(t,x)$ directly and give up the group law; neural ODEs \citep{chen2018neuralode} and learned time-$h$ maps \citep{qin2019data} keep it by integration or iteration. SympNets \citep{jin2020sympnets} and Hamiltonian neural networks \citep{greydanus2019hnn} preserve the symplectic structure of a time-$h$ map or of the field; our symplectic transformation uses SympNet modules as $F_\theta$ and obtains an exactly symplectic flow for every $t$. Experiment~1 is the honest comparison with the time-input alternative: on a problem with no recurrence the direct parametrization is more accurate inside its horizon, and the conjugacy representation pays for its structure in optimization difficulty.

\section{Limitations and outlook}\label{sec:limitations}
\emph{Scope of the evidence.} The three claims we make as recipes rest on three seeds each on one three-dimensional, one six-dimensional and one four-dimensional system with a known field. The other successes are single runs, and one (the pendulum) showed run-to-run variation in exactly the quantity R1 is designed to pin. Nothing has been tried beyond six dimensions or on a non-polynomial field. \emph{Optimization.} The transformation is harder to fit than a direct time-input network on the easiest problem we tried, and the data-only limit cycle is not solved; the oracle run says the limiting factor there is the expressivity of a coupling network on a relaxation-type cycle. Architectures with more local expressivity, an equation residual on whatever part of the field is known, or a loss on pairs several periods apart, which the group law makes as cheap as consecutive pairs, are the next things to try. \emph{Global charts.} A single $F_\theta$ represents the dynamics on one region conjugate to the reference; systems with several attractors or separatrices need a reference with the same global structure or an atlas of transformations on invariant sub-domains, across which the group law survives because basins are invariant. \emph{Chaos and resonance.} Chaotic attractors are not conjugate to any explicitly integrable flow and the method is not intended for them; near resonant equilibria the reference must contain the resonant normal-form terms, which can be added while keeping the flow explicit. \emph{Identifiability.} With data only, the invariants are identifiable in principle but, as the Van der Pol runs show, not necessarily through a loss on short-lag pairs; $F_\theta$ is identifiable only modulo the centralizer, and anchors must be chosen for the quantities of interest.

\section*{Conclusion}
A neural vector field transformation represents the flow of an autonomous system as a conjugate of an explicitly integrable reference flow. The representation is a flow for every parameter value, inherits the phase portrait of the reference and admits a time-uniform error bound; all of this holds exactly in every run. The reference must match all $C^1$-conjugacy invariants of the target, including the sense of rotation, and parametrized reference flows with explicit integral curves can expose them. Whether the method works is then decided by the training: with a relative residual on scale-free collocation points and point anchors, with a symplectic transformation and a gradient-form frequency map, and with invariants initialized from the field or the data, the spectrum of a six-dimensional sink and the action--angle variables of a four-dimensional Hamiltonian system are recovered reproducibly; without these devices the optimizer finds transformations that hide the mismatch where it is not sampled, and the training loss does not notice. The open problem is the transformation itself: its expressivity on relaxation-type dynamics and its conditioning on problems a direct network solves more easily.

\section*{Reproducibility}
The code for all experiments, including the reference flows of Section~\ref{sec:reference}, the trained models, the result files from which every number in this paper is generated, and the full experiment report are available from the author on request; every figure is produced by one script.

\appendix
\section{Experimental details}\label{app:details}
Table~\ref{tab:details} lists the configuration of each final run. Collocation batches are $2\,048$ points per step throughout. The scale bound is the amplitude of the $\tanh$ bounding the log-scale of each coupling block.
\begin{table}[h]
\caption{Final configurations. ``R1'' means relative residual, log-radial sampling of half the batch in $|x|\in[10^{-3},1]$, anchors $F_\theta(0)=0$, $DF_\theta(0)=I$.}
\label{tab:details}
\centering\scriptsize
\begin{tabular}{lp{3.2cm}p{4.2cm}p{3.6cm}p{2.1cm}}
\toprule
\# & $F_\theta$ & loss & domain / data & steps (Adam + L-BFGS) \\
\midrule
1 & 12 coupling blocks, width 64, scale bound 1 & push-forward residual; section anchor $\{x_1=0\}$ & $[-3,3]\times[-4.5,4.5]$ & $15\,000+1\,000$ \\
2 & 6 coupling blocks, width 64 & push-forward residual; anchors $F_\theta(0)=0$, $DF_\theta(0)=I$ & $[-1.5,1.5]^2$ & $6\,000+500$ \\
3 & 6 coupling blocks, width 64 & push-forward residual & $E<1.8$ & $6\,000+500$ \\
4 & 12 coupling blocks, width 64, scale bound 1 & data loss \eqref{eq:ldat}, stratified pairs, lag $0.5$ doubled by criterion; anchor $F_\theta(0)=0$; invariants from data, frozen 3\,000 steps & 40 trajectories, length 12, $\Delta t=0.05$, noise $0.005$, from $[-2.5,2.5]^2$ & $10\,000+500$ \\
5 & 6 coupling blocks, width 64 & data loss + residual for $f(\cdot;\alpha)$ & 12 trajectories, length 15, $\Delta t=0.1$, noise $\expFiveNoise$ & $8\,000+300$ \\
6 & 8 gradient modules, width 64 (RealNVP: 6 blocks) & push-forward residual & $H<3$ & $6\,000+500$ \\
7 & 9 coupling blocks, width 64 & R1 & $[-1,1]^3$ & $8\,000+500$ \\
8 & 12 coupling blocks, width 64 ($\expEightParamsR$ parameters) & R1 & $[-1,1]^6$ & $6\,000+500$ \\
9 & 8 gradient modules, width 64 ($\expNineParamsS$ parameters) & push-forward residual; $\omega=\omega_0+\nabla K$, $\omega_0=-1$ & $[-1.6,1.6]^4$, $H<1.5$ & $6\,000+500$ \\
\bottomrule
\end{tabular}
\end{table}

\section{The experiment report}\label{app:report}
Every run mentioned in Section~\ref{sec:failures}, and several that are not (variants of Experiment~1, the retrained pendulum copy, the two converged RealNVP seeds of Experiment~6, the ball-anchored three-dimensional run), is documented with its full configuration, its numbers and the diagnosis made at the time in the experiment report that is available with the code. That report was written chronologically as the runs happened; this paper reorganizes its content by mechanism and reports only the final configuration of each experiment. The numbers in both documents are generated from the same result files by the same script, so they agree.

\bibliographystyle{unsrtnat}
\bibliography{references}

\begin{thebibliography}{30}
\providecommand{\natexlab}[1]{#1}
\providecommand{\url}[1]{\texttt{#1}}
\expandafter\ifx\csname urlstyle\endcsname\relax
  \providecommand{\doi}[1]{doi: #1}\else
  \providecommand{\doi}{doi: \begingroup \urlstyle{rm}\Url}\fi

\bibitem[Lagaris et~al.(1998)Lagaris, Likas, and
  Fotiadis]{lagaris1998artificial}
Isaac~E. Lagaris, Aristidis Likas, and Dimitrios~I. Fotiadis.
\newblock Artificial neural networks for solving ordinary and partial
  differential equations.
\newblock \emph{IEEE Transactions on Neural Networks}, 9\penalty0 (5):\penalty0
  987--1000, 1998.

\bibitem[Raissi et~al.(2019)Raissi, Perdikaris, and
  Karniadakis]{raissi2019pinn}
Maziar Raissi, Paris Perdikaris, and George~Em Karniadakis.
\newblock Physics-informed neural networks: A deep learning framework for
  solving forward and inverse problems involving nonlinear partial differential
  equations.
\newblock \emph{Journal of Computational Physics}, 378:\penalty0 686--707,
  2019.

\bibitem[Qin et~al.(2019)Qin, Wu, and Xiu]{qin2019data}
Tong Qin, Kailiang Wu, and Dongbin Xiu.
\newblock Data driven governing equations approximation using deep neural
  networks.
\newblock \emph{Journal of Computational Physics}, 395:\penalty0 620--635,
  2019.

\bibitem[Chen et~al.(2018)Chen, Rubanova, Bettencourt, and
  Duvenaud]{chen2018neuralode}
Ricky T.~Q. Chen, Yulia Rubanova, Jesse Bettencourt, and David Duvenaud.
\newblock Neural ordinary differential equations.
\newblock In \emph{Advances in Neural Information Processing Systems},
  volume~31, 2018.

\bibitem[Wang and Yu(2021)]{wang2021physics}
Rui Wang and Rose Yu.
\newblock Physics-guided deep learning for dynamical systems: A survey, 2021.
\newblock arXiv:2107.01272.

\bibitem[Bilo{\v{s}} et~al.(2021)Bilo{\v{s}}, Sommer, Rangapuram, Januschowski,
  and G{\"u}nnemann]{bilos2021neuralflows}
Marin Bilo{\v{s}}, Johanna Sommer, Syama~Sundar Rangapuram, Tim Januschowski,
  and Stephan G{\"u}nnemann.
\newblock Neural flows: Efficient alternative to neural {ODE}s.
\newblock In \emph{Advances in Neural Information Processing Systems},
  volume~34, 2021.

\bibitem[Jin et~al.(2020)Jin, Zhang, Zhu, Tang, and
  Karniadakis]{jin2020sympnets}
Pengzhan Jin, Zhen Zhang, Aiqing Zhu, Yifa Tang, and George~Em Karniadakis.
\newblock {SympNets}: Intrinsic structure-preserving symplectic networks for
  identifying {H}amiltonian systems.
\newblock \emph{Neural Networks}, 132:\penalty0 166--179, 2020.

\bibitem[Arnold(1992)]{arnold1992ode}
Vladimir~I. Arnold.
\newblock \emph{Ordinary Differential Equations}.
\newblock Springer-Verlag, Berlin, 1992.

\bibitem[Lee(2012)]{lee2012smooth}
John~M. Lee.
\newblock \emph{Introduction to Smooth Manifolds}, volume 218 of \emph{Graduate
  Texts in Mathematics}.
\newblock Springer, 2 edition, 2012.

\bibitem[Palis and de~Melo(1982)]{palis1982geometric}
Jacob Palis and Welington de~Melo.
\newblock \emph{Geometric Theory of Dynamical Systems: An Introduction}.
\newblock Springer-Verlag, New York, 1982.

\bibitem[Bizzi et~al.(2025)Bizzi, Nissenbaum, and Pereira]{bizzi2025ncf}
Arthur Bizzi, Lucas Nissenbaum, and Jo{\~a}o~M. Pereira.
\newblock Neural conjugate flows: A physics-informed architecture with flow
  structure.
\newblock In \emph{Proceedings of the AAAI Conference on Artificial
  Intelligence}, 2025.
\newblock arXiv:2411.08326.

\bibitem[Arnold(1989)]{arnold1989mathematical}
Vladimir~I. Arnold.
\newblock \emph{Mathematical Methods of Classical Mechanics}.
\newblock Springer-Verlag, New York, 2 edition, 1989.

\bibitem[Bhatia and Szeg{\H{o}}(1970)]{bhatia1970stability}
Nam~P. Bhatia and Giorgio~P. Szeg{\H{o}}.
\newblock \emph{Stability Theory of Dynamical Systems}.
\newblock Springer-Verlag, Berlin, 1970.

\bibitem[Hartman(1960)]{hartman1960lemma}
Philip Hartman.
\newblock A lemma in the theory of structural stability of differential
  equations.
\newblock \emph{Proceedings of the American Mathematical Society}, 11\penalty0
  (4):\penalty0 610--620, 1960.

\bibitem[Sternberg(1957)]{sternberg1957local}
Shlomo Sternberg.
\newblock Local contractions and a theorem of {P}oincar\'e.
\newblock \emph{American Journal of Mathematics}, 79\penalty0 (4):\penalty0
  809--824, 1957.

\bibitem[Arnold(1988)]{arnold1988geometrical}
Vladimir~I. Arnold.
\newblock \emph{Geometrical Methods in the Theory of Ordinary Differential
  Equations}.
\newblock Springer-Verlag, New York, 2 edition, 1988.

\bibitem[Dinh et~al.(2015)Dinh, Krueger, and Bengio]{dinh2015nice}
Laurent Dinh, David Krueger, and Yoshua Bengio.
\newblock {NICE}: Non-linear independent components estimation, 2015.
\newblock arXiv:1410.8516.

\bibitem[Dinh et~al.(2017)Dinh, Sohl-Dickstein, and Bengio]{dinh2017realnvp}
Laurent Dinh, Jascha Sohl-Dickstein, and Samy Bengio.
\newblock Density estimation using {Real NVP}.
\newblock In \emph{International Conference on Learning Representations}, 2017.

\bibitem[Jacobsen et~al.(2018)Jacobsen, Smeulders, and
  Oyallon]{jacobsen2018irevnet}
J{\"o}rn-Henrik Jacobsen, Arnold Smeulders, and Edouard Oyallon.
\newblock i-{RevNet}: Deep invertible networks.
\newblock In \emph{International Conference on Learning Representations}, 2018.

\bibitem[Kingma et~al.(2016)Kingma, Salimans, Jozefowicz, Chen, Sutskever, and
  Welling]{kingma2016improving}
Diederik~P. Kingma, Tim Salimans, Rafal Jozefowicz, Xi~Chen, Ilya Sutskever,
  and Max Welling.
\newblock Improving variational inference with inverse autoregressive flow.
\newblock In \emph{Advances in Neural Information Processing Systems},
  volume~29, 2016.

\bibitem[Teshima et~al.(2020)Teshima, Ishikawa, Tojo, Oono, Ikeda, and
  Sugiyama]{teshima2020coupling}
Takeshi Teshima, Isao Ishikawa, Koichi Tojo, Kenta Oono, Masahiro Ikeda, and
  Masashi Sugiyama.
\newblock Coupling-based invertible neural networks are universal
  diffeomorphism approximators.
\newblock In \emph{Advances in Neural Information Processing Systems},
  volume~33, 2020.

\bibitem[Chen et~al.(2024)Chen, Yang, Xiang, and Hao]{chen2024autodiff}
Chuqi Chen, Yahong Yang, Yang Xiang, and Wenrui Hao.
\newblock Automatic differentiation is essential in training neural networks
  for solving differential equations, 2024.
\newblock arXiv:2405.14099.

\bibitem[Budi{\v{s}}i{\'c} et~al.(2012)Budi{\v{s}}i{\'c}, Mohr, and
  Mezi{\'c}]{budisic2012applied}
Marko Budi{\v{s}}i{\'c}, Ryan Mohr, and Igor Mezi{\'c}.
\newblock Applied {K}oopmanism.
\newblock \emph{Chaos}, 22\penalty0 (4):\penalty0 047510, 2012.

\bibitem[Lusch et~al.(2018)Lusch, Kutz, and Brunton]{lusch2018deep}
Bethany Lusch, J.~Nathan Kutz, and Steven~L. Brunton.
\newblock Deep learning for universal linear embeddings of nonlinear dynamics.
\newblock \emph{Nature Communications}, 9:\penalty0 4950, 2018.

\bibitem[Bevanda et~al.(2022)Bevanda, Kirmayr, Sosnowski, and
  Hirche]{bevanda2022diffeomorphic}
Petar Bevanda, Johannes Kirmayr, Stefan Sosnowski, and Sandra Hirche.
\newblock Learning the {K}oopman eigendecomposition: A diffeomorphic approach.
\newblock In \emph{American Control Conference (ACC)}, 2022.
\newblock arXiv:2110.07786.

\bibitem[Bramburger et~al.(2021)Bramburger, Brunton, and
  Kutz]{bramburger2021conjugate}
Jason~J. Bramburger, Steven~L. Brunton, and J.~Nathan Kutz.
\newblock Deep learning of conjugate mappings.
\newblock \emph{Physica D: Nonlinear Phenomena}, 427:\penalty0 133008, 2021.

\bibitem[Li et~al.(2020)Li, Dong, Zhang, and Wang]{li2020neural}
Shuo-Hui Li, Chen-Xiao Dong, Linfeng Zhang, and Lei Wang.
\newblock Neural canonical transformation with symplectic flows.
\newblock \emph{Physical Review X}, 10:\penalty0 021020, 2020.

\bibitem[Bondesan and Lamacraft(2019)]{bondesan2019learning}
Roberto Bondesan and Austen Lamacraft.
\newblock Learning symmetries of classical integrable systems, 2019.
\newblock arXiv:1906.04645.

\bibitem[Daigavane et~al.(2022)Daigavane, Kosmala, Cranmer, Smidt, and
  Ho]{daigavane2022learning}
Ameya Daigavane, Arthur Kosmala, Miles Cranmer, Tess Smidt, and Shirley Ho.
\newblock Learning integrable dynamics with action-angle networks, 2022.
\newblock arXiv:2211.15338.

\bibitem[Greydanus et~al.(2019)Greydanus, Dzamba, and
  Yosinski]{greydanus2019hnn}
Samuel Greydanus, Misko Dzamba, and Jason Yosinski.
\newblock Hamiltonian neural networks.
\newblock In \emph{Advances in Neural Information Processing Systems},
  volume~32, 2019.

\end{thebibliography}

\end{document}